\title{Erd\H os--Littlewood--Offord problem with arbitrary probabilities}
\author{Mihir Singhal\thanks{Department of Mathematics, Massachusetts Institute of Technology, Cambridge, MA 02139. Email: \href{mailto:mihirs@mit.edu}{\nolinkurl{mihirs@mit.edu}}.}} 
\date{}
\begin{document}

\maketitle

\begin{abstract}
The classical {\elo} problem concerns the random variable $X = a_1 \xi_1 + \dots + a_n \xi_n$, where $a_i \in \R \setminus \{0\}$ are fixed and $\xi_i \sim \Ber(1/2)$ are independent. The {\elo} theorem states that the maximum possible concentration probability $\max_{x \in \R} \Pr(X = x)$ is $\binom{n}{\lfloor n/2\rfloor} / 2^n$, achieved when the $a_i$ are all 1. As proposed by Fox, Kwan, and Sauermann, we investigate the general case where $\xi_i \sim \Ber(p)$ instead. Using purely combinatorial techniques, we show that the exact maximum concentration probability is achieved when $a_i \in \{-1, 1\}$ for each $i$. Then, using Fourier-analytic techniques, we investigate the optimal ratio of 1s to $-1$s. Surprisingly, we find that in some cases, the numbers of 1s and $-1$s can be far from equal.
\end{abstract}

\section{Introduction}

The Erd\H os--Littlewood--Offord theorem is a classical result in combinatorics and probability theory. It concerns anticoncentration of random variables of the form
\begin{equation*}
X = \sum_{i=1}^n a_i \xi_i,
\end{equation*}
where $a_1, \dots, a_n$ are nonzero reals and the $\xi_i$ are independent instances of the Bernoulli random variable $\Ber(1/2)$, which takes the values $0$ and $1$ each with probability $1/2$. 
The {\elo} theorem \cite{elo} asserts that the \textit{concentration probability} of $X$, defined as $\max_{x \in \R} \Pr(X = x)$, is at most $\binom{n}{\lfloor n/2 \rfloor} / 2^n = O(1/\sqrt n)$. This is tight since equality is achieved when the $a_i$ are $\pm 1$. 

It is natural to look at the behavior of $X$ when the $\xi_i$ follow a general Bernoulli distribution $\Ber(p)$ instead of $\Ber(1/2)$, meaning that they take the values $0$ and $1$ with probabilities $1-p$ and $p$, respectively. 
In \cite{cacia}, Fox, Kwan, and Sauermann asked what bounds can be given on the concentration probability as a function of $n$ and $p$, and showed some related asymptotic bounds. The specific question we are concerned with in this paper is the following.

\begin{question} \label{ques} \normalfont
Let $n$ be any positive integer and let $p$ be any probability. Define the random variable
\[X = \sum_{i=1}^n a_i \xi_i,\]
where $a_1, \dots, a_n$ are nonzero reals and $\xi_1, \dots, \xi_n$ are independent instances of $\Ber(p)$. What is the maximum possible value of the concentration probability $\max_{x \in \R} \Pr(X = x)$, as a function of $n$ and $p$?
\end{question}

If all the $a_i$ are positive, then one can imitate Erd\H os's original proof of the {\elo} theorem to show that the maximum is achieved when $a_1 = \dots = a_n$. In the classical case where $p = 1/2$, it makes no difference to assume all the $a_i$ are positive, since negating any $a_i$ only shifts $X$ by a constant (in distribution). However, one cannot make such an assumption in general, making the general case much harder.

In this paper, we show the following result, which provides an exact bound for all $n$.

\begin{thm} \label{mainthm}
Fix a positive integer $n$ and real $p$ between 0 and 1. Let $a_1, \dots, a_n$ be nonzero reals, and let $\xi_1, \dots, \xi_n$ be independent instances of $\Ber(p)$. Consider the random variable
\[X = \sum_{i=1}^n a_i \xi_i.\]
Then, the concentration probability $\max_{x \in \R} \Pr(X = x)$ is maximized when all the $a_i$ are $\pm 1$. In other words,
\begin{equation} \label{binbd}
    \Pr(X = x) \le \max_{0 \le \ell \le n} \max_{y \in \Z} \Pr(\Bin(\ell, p) - \Bin(n - \ell, p) = y).
\end{equation}
Here $\ell$ corresponds to the number of $a_i$ that equal 1.
\end{thm}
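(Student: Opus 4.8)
The plan is to fix a value $x$ attaining the concentration probability and write
\[
\Pr(X = x) = \sum_{\substack{S \subseteq [n] \\ \sum_{i \in S} a_i = x}} p^{|S|}(1-p)^{n-|S|},
\]
so that $\Pr(X=x)$ depends only on the set system $\mathcal F = \{S \subseteq [n] : \sum_{i\in S} a_i = x\}$ together with the ``layer weights'' $w_k = p^k(1-p)^{n-k}$, which depend only on $|S|$. The task thus becomes: over all set systems arising as a level set of a linear functional with no zero coefficient, bound $\sum_{S\in\mathcal F} w_{|S|}$, and check the extremum is met by a $\pm 1$ functional. Since rescaling all $a_i$ by a common positive factor changes nothing, one may normalize $\max_i|a_i|=1$; and since a configuration with a zero coefficient is really a configuration on fewer coefficients, I would run the whole argument as an induction on $n$ (base case $n=1$ immediate), so that a compactness/upper-semicontinuity argument absorbs degenerate and vanishing-coefficient configurations into the inductive hypothesis.

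The first case is when $\mathcal F$ is an antichain. Here I would invoke the $p$-biased LYM inequality via a random maximal chain: let $\emptyset = B_0 \subsetneq B_1 \subsetneq \dots \subsetneq B_n = [n]$ be a uniformly random maximal chain and $K \sim \Bin(n,p)$ independent of it; then $B_K$ is distributed as $\{i : \xi_i = 1\}$, so $\Pr(X=x) = \Pr(B_K \in \mathcal F)$, and since an antichain meets each maximal chain in at most one set, conditioning on the chain gives $\Pr(X=x) \le \max_k \Pr(K=k) = \max_k \binom{n}{k}p^k(1-p)^{n-k}$. This is precisely the $\ell = n$ term on the right-hand side of \eqref{binbd} (the mode of $\Bin(n,p)$), so this case is complete.

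The substance is the remaining case, where $\mathcal F$ is not an antichain. Then $\mathcal F$ contains comparable sets $S \subsetneq T$, and $R := T \setminus S$ is a nonempty set with $\sum_{i\in R} a_i = 0$; taking $R$ of minimum size, the only zero-sum subsets of $R$ are $\emptyset$ and $R$ (so $|R| \ge 2$). More generally, every chain inside $\mathcal F$ has consecutive members differing in size by at least $2$, the jump sizes being sizes of zero-sum subsets of $[n]$; it is this layer-skipping chain structure that must be matched against the mixed configurations on the right-hand side of \eqref{binbd}. The natural attempt is to write $X = Y + Z$ with $Z = \sum_{i\in R} a_i \xi_i$ and $Y = \sum_{i\notin R} a_i \xi_i$ independent, so $\Pr(X=x) = \sum_z \Pr(Z=z)\,\Pr(Y=x-z)$, and to argue that the block $Z$ is dominated --- for the purpose of convolving against an arbitrary independent summand $Y$ --- by a balanced $\pm 1$ block on $|R|$ coordinates (namely $\lceil |R|/2 \rceil$ coefficients $+1$ and $\lfloor |R|/2 \rfloor$ coefficients $-1$); the inductive hypothesis applied to $Y$, which has $n - |R| < n$ coordinates, would then close the argument. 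I expect this domination step --- equivalently, a sharp LYM-type inequality for set families whose chains must skip layers in the pattern allowed by the available zero-sum subsets --- to be the main obstacle, since Sperner's theorem and its usual weighted refinements are not tight enough here, and one must also be careful with rescaling inside the induction.
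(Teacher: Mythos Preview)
Your antichain case is correct: when $\mathcal F$ is an antichain, the random-chain-plus-binomial coupling gives $\Pr(X=x)\le\max_k\binom{n}{k}p^k(1-p)^{n-k}$, which is the $\ell=n$ term on the right-hand side. The non-antichain case, however, has two gaps, not one. The first you flag yourself: the domination step, that the zero-sum block $Z$ on $R$ can be replaced by a balanced $\pm 1$ block $Z'$ without decreasing $\max_x\Pr(Y+Z=x)$ for an arbitrary independent summand $Y$, is asserted but not argued, and it is not a standard majorization fact. The second gap you do not flag, and it prevents the induction from closing even if domination were granted. After replacing $Z$ by $Z'$ you have $X'=Y+Z'$, and you then invoke the inductive hypothesis on $Y$. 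But that hypothesis only bounds $\max_y\Pr(Y=y)$ by the right-hand side of \eqref{binbd} for $n-|R|$ variables; it says nothing about $\max_x\Pr(Y+Z'=x)$, which depends on the whole distribution of $Y$, not just its mode. The trivial bound $\max_x\Pr(Y+Z'=x)\le\max_y\Pr(Y=y)$ lands you at the right-hand side for $n-|R|$, which is strictly larger than what you need at $n$. To make an induction of this shape work you would have to carry a much stronger hypothesis --- roughly, that the full distribution of $X$ is dominated, in a convolution-stable sense, by that of some $\pm1$ configuration --- and formulating and proving such a statement is essentially the whole theorem.

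The paper avoids induction entirely and takes a different decomposition. It splits the $a_i$ by \emph{sign} (not by a zero-sum block), encodes the positive side by $\alpha(k,r)=$ probability that a uniformly random $k$-subset of the positive $a_i$ sums to $r$ (similarly $\beta$ for the negatives), and observes that $\Pr(X=x)$ is \emph{bilinear} in $(\alpha,\beta)$. Each of $\alpha,\beta$ satisfies an LYM-type inequality coupling layers $k$ and $k{+}1$, and the key lemma is that every function obeying this constraint is a convex combination of ``pure'' functions --- those supported on a single strictly increasing sequence $r_0<r_1<\cdots$. Bilinearity plus convexity then reduces the bound to the case of pure $\alpha,\beta$, after which a short combinatorial argument identifies the optimum as a point probability of $\Bin(\ell,p)-\Bin(n-\ell,p)$. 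This convex-decomposition lemma does the work your domination step was meant to do, but at the level of the one-sided profiles $\alpha,\beta$ rather than a zero-sum block, and with no recursion to close.
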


Perhaps the most standard tool for studying sums of independent random variables is Fourier analysis. We remark that Fourier-analytic techniques do provide a partial answer to the question. For even $n$, Fourier analysis (as in \cite[Proposition 6.1]{cacia})
yields the result of Theorem \ref{mainthm}, with the optimum occurring when $\ell = \frac n2$. It fails to give a bound that works for all $n$, however -- for odd $n$, the best it can show is that the concentration probability is at most $1 + o(1)$ times the probabilty achieved when $(n+1)/2$ of the $a_i$ are 1 and the rest are $-1$. 

Like Erd\H os's original proof of the {\elo} theorem \cite{elo}, our proof uses only the trivial fact that if $T \subsetneq S$ are sets of positive numbers, then the sum of the elements of $T$ is less than the sum of the elements of $S$.
In our proof, we split the $a_i$ into positive and negative parts. We then consider vectors $\alpha, \beta$ which are two-argument functions, where $\alpha(k, r)$ is the probability that a random $k$-subset of the positive $a_i$ sums to $r$, and $\beta$ is analogously defined for negative $a_i$. We observe that we can write the point probability $\Pr(X = x)$ as a bilinear function of $\alpha, \beta$. 
We then show that $\alpha, \beta$ can always be expressed as a convex combination of ``pure" objects (which are of a very specific structure). Since $\Pr(X = x)$ is bilinear in $\alpha, \beta$, we then upper bound it by its value when $\alpha, \beta$ are pure. This leaves us with a simple expression that we are able to bound explicitly.

Theorem \ref{mainthm} mostly resolves Question \ref{ques}. However, determining $\ell^*$, the number of $a_i$ that are 1 in the maximal case, is surprisingly nontrivial. One might expect that, $\ell = \lceil n/2 \rceil$ maximizes the expression (\ref{binbd}) in Theorem \ref{mainthm}. Indeed, there is some precedent for this kind of situation: in \cite{modq}, Vaughan and Wooley studied a mod $q$ analog to this problem, finding that in their setting, the worst case occurs when $\lfloor n/2 \rfloor$ of the coefficients are $-1$ and $\lceil n/2 \rceil$ are $1$.

However, surprisingly, the optimal $\ell$ in Theorem \ref{mainthm} may be far from $n/2$. We discuss this problem further in Section \ref{maxl}. We use the result of Theorem \ref{mainthm} (that all the $a_i$ are $\pm 1$) in order to greatly simplify certain Fourier-analytic expressions for the probability, allowing us to obtain fairly tight bounds on $\Pr(X = x)$. We then obtain asymptotic answers for $\ell^*$ as $n$ grows large, for fixed $p$. These results are tabulated later in Figure \ref{fig:lstarresults}, in Section \ref{concl}.

\subsection*{Notation}
We use standard asymptotic notation throughout, including $O, \Omega, o, \omega$. All asymptotic notation is to be taken as $n \rightarrow \infty$. For example, $f = o(g)$ if $\lim_{n \rightarrow \infty} f/g = 0$, uniformly in all parameters other than $n$. Some expressions may also be negative; when we say $f = O(g)$ or $f = o(g)$, we mean that $|f| = O(g)$ or $|f| = o(g)$, respectively. On the other hand, when we say $f = \Omega(g)$ or $f = \omega(g)$ we require that $f$ be positive for sufficiently large $n$.

We use $\Ber(p)$ and $\Bin(n, p)$ to denote the Bernoulli and binomial distributions, respectively. As in the statement of Theorem \ref{mainthm}, we occasionally abuse notation by using these to denote random variables with the same distribution; different instances occurring in a single expression are intended to be independent.

We also frequently write sums over parameters that can take infinitely or uncountably many values. In such cases, the summand will only be nonzero for finitely many values of the parameter, and the sum will be understood to mean the sum of only those nonzero values.

\section{Proof of Theorem \ref{mainthm}} \label{mainpf}

Without loss of generality, suppose $a_1, \dots, a_\ell$ are positive, and $a_{\ell+1}, \dots, a_n$ are negative. Define the multisets $A = \{a_1, \dots, a_\ell\}, B = \{-a_{\ell+1}, \dots, -a_n\}$; then $|A| = \ell, |B| = m$, where $m = n - \ell$. We will henceforth treat $\ell, m$ as fixed. 
Define the (nonnegative) random variables $Y = \sum_{i = 1}^\ell a_i\xi_i$ and $Z = -\sum_{i = \ell+1}^n a_i\xi_i$. Note that $X = Y - Z$. 

Now, we define the function $\alpha: \{0, \dots, \ell\} \times \R \rightarrow \R$ so that $\alpha(k, r)$ is the probability that a uniformly random $k$-subset of $A$ sums to $r$. Similarly define $\beta: \{0, \dots, m\} \times \R \rightarrow \R$ so that $\beta(k, r)$ is the probability that a uniformly random $k$-subset of $B$ sums to $r$. Note that $\alpha, \beta$ have finite support (where the support is defined as the set of $(k, r)$ at which $\alpha$ or $\beta$ is nonzero). 

By conditioning on the number of $\xi_1, \dots, \xi_\ell$ that equal $1$, we have that
\[\Pr(Y = r) = \sum_{k=0}^\ell \binom{\ell}{k} p^k (1-p)^{\ell-k} \alpha(k, r),\]
and similarly, 
\[\Pr(Z = r) = \sum_{k=0}^m \binom{m}{k} p^k (1-p)^{m-k} \beta(k, r).\] Note that $\Pr(Y = r)$ depends only on $\alpha$ and $r$, and is a linear function of $\alpha$ (where the space of functions from $\{0, \dots, \ell\} \times \R$ to $\R$ with finite support is treated as a vector space in the obvious way). Similarly, $\Pr(Z = r)$ depends only on $\beta$ and $r$, and is a linear function of $\beta$. Then, 
\begin{align}
    \Pr(X = x) &= \sum_r \Pr(Y = r)\Pr(Z = r - x) \nonumber \\
    &= \sum_r \paren{\sum_{k=0}^\ell \binom{\ell}{k} p^k (1-p)^{\ell-k} \alpha(k, r)}\paren{\sum_{k=0}^m \binom{m}{k} p^k (1-p)^{m-k} \beta(k, r - x)} \label{ptprob}
\end{align}
is a bilinear function of $\alpha, \beta$. Define $B(\alpha, \beta)$ to be the bilinear function in (\ref{ptprob}).

We now note some properties of $\alpha, \beta$.

\begin{fact} \label{props}
The following are true when $\gamma$ is $\alpha$ or $\beta$:
\begin{enumerate}
    \item $\gamma(k, r) \ge 0$ for all $k, r$. 
    \item $\sum_r \gamma(k, r) = 1$ for all $k$.
    \item For all $k, r$,
\begin{equation*}
\sum_{r' \le r} \gamma(k+1, r') + \sum_{r' \ge r} \gamma(k, r') \le 1.
\end{equation*}
\end{enumerate}
\end{fact}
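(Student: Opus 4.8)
The plan is to verify each of the three properties in Fact \ref{props} directly from the definition of $\gamma$ as the distribution of the sum of a uniformly random $k$-subset of a fixed multiset $S$ of positive reals (where $S = A$ or $S = B$, and we take $|S| = N$). Properties (1) and (2) are immediate: $\gamma(k, r)$ is a probability, hence nonnegative, and for each fixed $k$ the values $\gamma(k, r)$ are the point masses of an honest probability distribution (the distribution of the sum of a random $k$-subset), so they sum to $1$. The content is entirely in property (3).

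For property (3), I would couple the two distributions appearing on the left-hand side. Fix $k$ and $r$. Let $\mathcal{S}_{k}$ denote the set of $k$-subsets of $S$, each taken with weight $1/\binom{N}{k}$, and similarly $\mathcal{S}_{k+1}$. I want to show that the probability that a random $(k+1)$-subset has sum $\le r$ plus the probability that a random $k$-subset has sum $\ge r$ is at most $1$; equivalently, writing $p_{\le} = \Pr(\text{sum of random } (k+1)\text{-subset} \le r)$ and $q_{\ge} = \Pr(\text{sum of random } k\text{-subset} \ge r)$, I want $p_{\le} + q_{\ge} \le 1$, i.e. $p_{\le} \le 1 - q_{\ge} = \Pr(\text{sum of random } k\text{-subset} < r)$. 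So it suffices to build an injection-like (measure-preserving) correspondence showing that $(k+1)$-subsets with sum $\le r$ are ``no more likely'' than $k$-subsets with sum $< r$. The natural map is: from a $(k+1)$-subset $T$ with $\sum T \le r$, remove its largest element to get a $k$-subset $T'$ with $\sum T' < \sum T \le r$ (here I use the stated trivial fact that a proper subset of positive numbers has strictly smaller sum, which guarantees $\sum T' < r$ even in the boundary case $\sum T = r$). The issue is that this map is not injective, so I need to account for multiplicities via a double-counting / weighting argument.

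Concretely, I would count ordered pairs $(T, s)$ where $T$ is a $k$-subset with $\sum T < r$ and $s \in S \setminus T$, versus pairs $(U, t)$ where $U$ is a $(k+1)$-subset with $\sum U \le r$ and $t$ is the maximum element of $U$. Each $(k+1)$-subset $U$ with $\sum U \le r$ contributes exactly one pair on the second side (choosing $t = \max U$), and the associated $k$-subset $U \setminus \{t\}$ has sum $< r$, while $t \in S \setminus (U\setminus\{t\})$; so the second family injects into the first. Hence
\[
\#\{U : |U| = k+1,\ \textstyle\sum U \le r\} \le \#\{(T, s) : |T| = k,\ \textstyle\sum T < r,\ s \in S\setminus T\} = (N-k)\cdot \#\{T : |T| = k,\ \textstyle\sum T < r\}.
\]
Dividing by $\binom{N}{k+1} = \binom{N}{k}\cdot \frac{N-k}{k+1}$ on the left and rewriting, this gives
\[
p_{\le} \le \frac{(N-k)\,\#\{T : \sum T < r\}}{\binom{N}{k+1}} = \frac{(k+1)\,\#\{T : \sum T < r\}}{\binom{N}{k}} = (k+1)\,\Pr(\text{sum of random } k\text{-subset} < r),
\]
which is a factor of $k+1$ weaker than what I want. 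To remove this factor I should instead weight the correspondence so the largest element is not over-counted — but actually the cleaner fix is: in the pair $(T,s)$ on the first side, additionally require that $s > \max T$ (so that adding $s$ back recovers exactly the subset whose max is $s$). Then the correspondence $U \mapsto (U \setminus \{\max U\}, \max U)$ is a bijection onto $\{(T,s): |T|=k,\ s > \max T,\ \sum T + s \le r\}$, and relaxing $\sum T + s \le r$ to $\sum T < r$ (valid since $s>0$ forces $\sum T < \sum T + s \le r$) and dropping $s > \max T$ only enlarges the target set; so
\[
\#\{U : |U|=k+1,\ \textstyle\sum U \le r\} \le \#\{(T, s) : |T|=k,\ s\in S\setminus T,\ \textstyle\sum T < r\} \Big/ 1,
\]
and the over-counting is handled because distinct $U$ give distinct pairs. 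Re-examining, the honest statement is that the map $U \mapsto (U\setminus\{\max U\}, \max U)$ is \emph{injective}, so $\#\{U\} \le \#\{(T,s)\}$, but $\#\{(T,s) : s > \max T\} \le \#\{T : \sum T < r\}\cdot$(average number of valid $s$), which is still not obviously $\le \binom{N}{k+1}\cdot q$...

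The main obstacle, then, is precisely this bookkeeping: getting the constant right so that the bound is $p_{\le} + q_{\ge} \le 1$ rather than something weaker. I expect the cleanest route is to avoid counting altogether and argue probabilistically: generate a uniformly random $(k+1)$-subset $U$, let $t = \max U$ and $T = U \setminus \{t\}$; I claim $T$ is then a uniformly random $k$-subset \emph{conditioned on} $\max T < t$, and as $t$ ranges over its distribution the unconditional law of $T$ is \emph{not} uniform — so this still needs care. A robust alternative is to prove property (3) by a direct interchange argument on the summation: expand $\sum_{r' \le r}\gamma(k+1,r') + \sum_{r'\ge r}\gamma(k,r')$, use that $\gamma(k+1,\cdot)$ is a ``one-step'' mixture of $\gamma(k,\cdot)$ (namely, a random $(k+1)$-subset is a random $k$-subset together with one more random element from the complement), condition on the $k$-subset $T$, and observe that for each fixed $T$ the event ``$\sum U \le r$'' (where $U \supset T$, $|U| = k+1$) and the event ``$\sum T \ge r$'' cannot both have positive probability, since $\sum U > \sum T$. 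This last phrasing is the one I would write up, as it uses exactly the trivial monotonicity fact the paper advertises and sidesteps all constants.
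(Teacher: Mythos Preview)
Your final paragraph is correct and is the argument you should have led with: generate a uniformly random $k$-subset $T$ of $S$, then a uniformly random element $s$ of $S\setminus T$, and set $U=T\cup\{s\}$. Under this coupling $T$ and $U$ are \emph{both} uniform on their respective layers, and since $\sum U>\sum T$ (all elements are positive), the events $\{\sum U\le r\}$ and $\{\sum T\ge r\}$ are disjoint, giving
\[
\Pr\Bigl(\textstyle\sum U\le r\Bigr)+\Pr\Bigl(\textstyle\sum T\ge r\Bigr)\le 1,
\]
which is exactly Property~3. The one step you assert but do not verify is that $U$ is uniform; this is an easy count (each $(k+1)$-subset arises from exactly $k+1$ pairs $(T,s)$, each with probability $\binom{N}{k}^{-1}(N-k)^{-1}$), but it is precisely the ``bookkeeping'' that stymied your earlier attempts, so write it down. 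Everything before that last paragraph is indeed a dead end and should be dropped.

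The paper takes a different route. It argues by contradiction using a bipartite Hall-type argument: letting $\mathscr S$ be the family of $k$-subsets with sum $\ge r$ and $\mathscr T$ the family of $(k+1)$-subsets with sum $\le r$, failure of Property~3 forces $|\mathscr T|+\frac{N-k}{k+1}|\mathscr S|>\binom{N}{k+1}$; biregularity of the inclusion graph between layers $k$ and $k+1$ then gives $|N(\mathscr S)|\ge\frac{N-k}{k+1}|\mathscr S|$, so $N(\mathscr S)\cap\mathscr T\neq\varnothing$, producing $S\subset T$ with $\sum S\ge r\ge\sum T$, a contradiction. Your coupling argument is shorter and more transparent---it avoids the contradiction and the neighborhood estimate entirely---while the paper's argument makes the combinatorial structure (the inclusion graph between adjacent layers of the Boolean lattice) more explicit. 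Both rest on the same one-line monotonicity fact.
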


\begin{proof}
Properties 1 and 2 follow immediately from the definition of $\alpha, \beta$. We will prove property 3 for $\gamma = \alpha$; the $\gamma = \beta$ case is identical.

Let $\ms{S}$ be the collection of $k$-subsets (subsets of size $k$) of $A$ whose sum is at least $r$ and let $\ms{T}$ be the collection of $(k+1)$-subsets of $A$ whose sum is at most $r$. We wish to show that $|\ms{T}|/\binom{\ell}{k+1} + |\ms{S}|/\binom{\ell}{k} \le 1$. For a contradiction, suppose the opposite. Then, multiplying through, we must have
\begin{equation}\label{STsizes}
|\ms T| + \frac{\ell-k}{k+1}|\ms S| > \binom{\ell}{k+1}.
\end{equation}
Now, consider the bipartite graph which connects subsets of $A$ of size $k$ (the left side) and $k+1$ (the right side) by inclusion. This is a biregular graph, with degree $\ell - k$ on the left and degree $k + 1$ on the right. Thus, the number of edges touching $\ms S$ is $(\ell - k)|\ms S|$, and so the size of $N(\ms S)$, the set of vertices that neighbor an element of $\ms S$, is at least $\frac{\ell-k}{k+1}|\ms S|$. Thus, by (\ref{STsizes}), $N(\ms S)$ and $\ms T$ have an element in common, which means that there exist $S \in \ms S$ and $T \in \ms T$ such that $S \subset T$. But then the sum of the elements of $T$ would be greater than that of $S$, contradicting the fact that the sum of the elements of $T$ is at most $r$ and the sum of the elements of $S$ is at least $r$.
\end{proof}

Let $\mcF_i$ be the set of all functions from $\{0, \dots, i\} \times \R$ to $\R$ with finite support satisfying the three properties of Fact \ref{props}. (We henceforth refer to the parts of Fact \ref{props} as Properties 1, 2, 3, respectively.) Then $\alpha \in \mcF_\ell, \beta \in \mcF_m$.

Define $\gamma \in \mcF_i$ to be \textit{pure} if there exist $r_0 < r_1 < \dots < r_i$ so that $\gamma(k, r) = 1$ if $r = r_k$, and otherwise $\gamma(k, r) = 0$.

\begin{lem} \label{convex}
All $\gamma \in \mcF_i$ are a convex combination of pure functions.
\end{lem}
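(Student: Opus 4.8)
The plan is to realize $\gamma$ as the law of a monotone coupling of the ``layer'' distributions $\gamma(k,\cdot)$. Fix $R = \{r \in \R : \gamma(k,r) \neq 0 \text{ for some } k\}$; this is finite since $\gamma$ has finite support, and every pure function produced below will be supported on $R$. For $k \in \{0,\dots,i\}$, Properties 1 and 2 say that $\mu_k := \gamma(k,\cdot)$ is a probability distribution on $R$; write $F_k(r) = \sum_{r' \le r}\gamma(k,r')$ for its cumulative distribution function and $F_k(r^-) = \sum_{r' < r}\gamma(k,r')$ for its left limit. The key observation is that, after rewriting $\sum_{r' \ge r}\gamma(k,r') = 1 - F_k(r^-)$, Property 3 becomes exactly
\[
F_{k+1}(r) \le F_k(r^-) \qquad \text{for all } k \text{ and all } r \in \R;
\]
thus the distribution functions decrease in $k$, strictly across each atom of $\mu_k$.

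Next I would bring in the inverse-CDF coupling. Let $U$ be uniform on $(0,1)$ and set $X_k = F_k^{-1}(U) := \inf\{r \in \R : F_k(r) \ge U\}$. Since each $\mu_k$ is supported on the finite set $R$, each $X_k$ lies in $R$, and the standard equivalence $\{F_k^{-1}(u) \le x\} = \{u \le F_k(x)\}$ gives $X_k \sim \mu_k$. I claim that $X_0 < X_1 < \dots < X_i$ almost surely. Fix $k \le i-1$. By the same equivalence, $X_{k+1} \le X_k$ holds iff $U \le F_{k+1}(X_k)$. But $F_{k+1}(X_k) \le F_k(X_k^-) \le U$: the first inequality is the reformulation of Property 3 evaluated at $r = X_k$, and the second is the general fact that the left limit of $F_k$ at the point $F_k^{-1}(u)$ is at most $u$. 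Hence $X_{k+1} \le X_k$ forces $U = F_k(X_k^-)$, and since $X_k \in R$ this pins $U$ to the fixed finite set $\{F_k(r^-) : r \in R\}$, an event of probability $0$. A union bound over $k \in \{0,\dots,i-1\}$ gives the claim.

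Finally I would read off the decomposition. Let $\nu$ be the distribution of the random tuple $(X_0,\dots,X_i)$; by the previous paragraph it is supported on the finite set of strictly increasing chains $C = (C_0 < \dots < C_i)$ in $R$, and each such $C$ gives a pure function $\pi_C \in \mcF_i$. Then for every $k$ and every $r$,
\[
\gamma(k,r) = \Pr(X_k = r) = \sum_C \nu(\{C\})\,\pi_C(k,r),
\]
so $\gamma = \sum_C \nu(\{C\})\,\pi_C$ is a convex combination of pure functions, since $\nu(\{C\}) \ge 0$ and $\sum_C \nu(\{C\}) = 1$.

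The substantive step is the middle paragraph: Property 3 is precisely the hypothesis that forces the quantile functions $F_k^{-1}$ to interleave \emph{strictly}, and the only real care needed is excluding ties among them, which is why the coupling runs through a continuous $U$ rather than a discrete set of levels. (Alternatively one could peel pure functions off one at a time, subtracting a chain with the largest admissible coefficient and renormalizing; that works too, but Property 3 must then be re-verified after each renormalization and termination needs a potential-function argument, which seems more cumbersome than the coupling.)
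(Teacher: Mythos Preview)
Your proof is correct and takes a genuinely different route from the paper. The paper does precisely the peeling argument you sketch in your final parenthetical: it inducts on the size of the support of $\gamma$, at each step letting $r_k$ be the minimal $r$ with $\gamma(k,r)>0$, using Property~3 to show $r_0<r_1<\cdots<r_i$, subtracting $\lambda=\min_k\gamma(k,r_k)$ times the resulting pure function, and then re-verifying Properties~1--3 for the renormalized remainder $\gamma'=\frac{1}{1-\lambda}(\gamma-\lambda\zeta)$. Your quantile-coupling approach is cleaner in that it sidesteps both the induction and the case-check that $\gamma'$ still satisfies Property~3 (which is the bulk of the paper's argument); the reformulation of Property~3 as $F_{k+1}(r)\le F_k(r^-)$, i.e.\ a strict form of stochastic domination, is exactly what makes the inverse-CDF coupling produce strictly increasing chains. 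One minor remark: since each $\mu_k$ is finitely supported, you in fact have $F_k(X_k^-)<U$ strictly for every $U\in(0,1)$, so the event $\{X_{k+1}\le X_k\}$ is empty rather than merely null; your probability-zero argument is thus a slight over-precaution, but of course still valid. The paper's peeling argument has the virtue of being entirely elementary and constructive (no auxiliary randomness), while yours exposes the order-theoretic content of Property~3 more directly and yields the decomposition in one stroke.
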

\begin{proof}
We induct on the size of the support of $\gamma$. By Property 2 (of Fact \ref{props}), $\gamma$ must have support with size at least $i + 1$. If the support has size $i + 1$, then Properties 2 and 3 imply that $\gamma$ itself must be pure.

Now, suppose the hypothesis is true for functions with support whose size is less than that of $\gamma$ (and suppose $\gamma$ has support with size strictly larger than $i + 1$). For each $k$, let $r_k$ be the minimal $r$ so that $\gamma(k, r) > 0$. Furthermore, let $\lambda = \min_k \gamma(k, r_k)$. We have that $r_k < r_{k+1}$ for all $k$, since otherwise, we would have \[\sum_{r' \le r_k} \gamma(k+1, r') + \sum_{r' \ge r_k} \gamma(k, r') \ge \gamma(k+1, r_{k+1}) + 1 > 1,\] contradicting Property 3. Therefore, we can construct the pure function $\zeta$ such that $\zeta(k, r_k) = 1$ for all $k$.

Let $\gamma' = \frac{1}{1-\lambda} (\gamma - \lambda \zeta)$. ($\lambda < 1$ by Property 2 since $\gamma$ has support with size greater than $i + 1$.) Since $\gamma$ is a convex combination of $\gamma'$ and $\zeta$, and $\gamma'$ has support strictly smaller than that of $\gamma$, by the inductive hypothesis it suffices to check that $\gamma' \in \mcF_i$, or in other words, that $\gamma'$ satisfies Fact \ref{props}. By definition of $\lambda$, $\gamma'$ is nonnegative everywhere, satisfying Property 1. Property 2 follows from the fact that Property 2 is satisfied by $\gamma$ and $\zeta$. It remains to check Property 3. If $r < r_{k+1}$, then

\begin{align*}
    \sum_{r' \le r} \gamma'(k+1, r') + \sum_{r' \ge r} \gamma'(k, r') \le 0 + 1 \le 1,
\end{align*}

due to the fact that $\gamma'(k+1, r')$ is zero when $r' < r_{k+1}$ and that $\gamma'$ satisfies Property 2. On the other hand, if $r \ge r_{k+1}$, then

\begin{align*}
\sum_{r' \le r} \gamma'(k+1, r') + \sum_{r' \ge r} \gamma'(k, r') 
&= \frac{1}{1-\lambda} \paren{\sum_{r' \le r} \gamma(k+1, r') - \lambda} + \frac{1}{1-\lambda} \paren{\sum_{r' \ge r} \gamma(k, r')} \\
&= \frac{1}{1-\lambda} \paren{\sum_{r' \le r} \gamma(k+1, r') + \sum_{r' \ge r} \gamma(k, r') - \lambda} \\
&\le \frac{1}{1-\lambda} \paren{1 - \lambda} \\
&= 1,
\end{align*}

where the inequality is because $\gamma$ satisfies Property 3. Thus $\gamma' \in \mcF_i$, and we are done.

\end{proof}

By Lemma \ref{convex}, since $B(\alpha, \beta)$ (from (\ref{ptprob})) is bilinear in $\alpha, \beta$, and $\alpha, \beta$ are convex combinations of pure functions, $B(\alpha, \beta)$ is bounded above by its value at some pure functions $\alpha' \in \mcF_\ell, \beta' \in \mcF_m$. In other words, we have
\begin{align}
\Pr(X = x) &= B(\alpha, \beta) \nonumber \\
&\le B(\alpha', \beta') \nonumber \\
&=
\sum_r \paren{\sum_{k=0}^\ell \binom{\ell}{k} p^k (1-p)^{\ell-k} \alpha'(k, r)}\paren{\sum_{k=0}^m \binom{m}{k} p^k (1-p)^{m-k} \beta'(k, r - x)}. \label{ptprobprime}
\end{align}

Since $\alpha', \beta'$ are pure, there must be $r_0 < \dots < r_\ell$ and $s_0 < \dots < s_m$ so that $\alpha'(k, r_k) = 1$ and $\beta'(j, s_j) = 1$ for all $k, j$ (and $\alpha', \beta'$ are 0 everywhere else). Then, we may evaluate (\ref{ptprobprime}) to get

\begin{align*} 
\Pr(X = x) &\le \sum_{\substack{k, j \\ r_k = s_j + x}} \binom{\ell}{k} p^k (1-p)^{\ell-k} \binom{m}{j} p^j (1-p)^{m-j}.
\end{align*}

Defining $f(k) = \Pr(\Bin(\ell, p) = k) = \binom{\ell}{k} p^k (1-p)^{\ell-k}$ and $g(j) = \Pr(\Bin(m, p) = j) = \binom{m}{j} p^j (1-p)^{m-j}$, this becomes
\begin{align} \label{ptprob2}
\Pr(X = x) &\le \sum_{\substack{k, j \\ r_k = s_j + x}} f(k) g(j).
\end{align}

Since this is an inequality, we may further assume that $r_0 < \dots < r_\ell$ and $s_0 < \dots < s_m$ are such that the right hand side of (\ref{ptprob2}) is maximized. (The maximum exists because the right hand side of (\ref{ptprob2}) can only take on finitely many values).

Now, our goal is to show that for these maximizing $r_0 < \dots < r_\ell$ and $s_0 < \dots < s_m$, the right hand side of (\ref{ptprob2}) is actually equal to $\Pr(\Bin(\ell, p) - \Bin(m, p) = d)$. To this end, in the following claims we prove some simple facts about the $r_i$ and $s_j$.

\begin{claim} \label{interleave}
It is never the case that $r_k < s_j + x < r_{k+1}$ or $s_j + x < r_k < s_{j+1} + x$ for any $k, j$.
\end{claim}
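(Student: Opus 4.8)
The plan is to use the maximality of the right-hand side of (\ref{ptprob2}): I would assume a violation of the claim and produce a local perturbation of the sequences $r_0 < \dots < r_\ell$, $s_0 < \dots < s_m$ that strictly increases $\sum_{k,j:\, r_k = s_j + x} f(k)g(j)$. Two preliminary remarks make this clean. First, since we may assume $p \in (0,1)$ (otherwise $X$ is a.s.\ constant and there is nothing to prove), $f(k) = \binom{\ell}{k}p^k(1-p)^{\ell-k} > 0$ for all $0 \le k \le \ell$, and likewise $g(j) > 0$ for all $0 \le j \le m$. Second, because both sequences are strictly increasing, the set $M = \{(k,j) : r_k = s_j + x\}$ is a \emph{monotone partial matching}: each index appears at most once, and $(k,j),(k',j') \in M$ with $k < k'$ forces $j < j'$; the right-hand side of (\ref{ptprob2}) is just $\sum_{(k,j)\in M} f(k)g(j)$.

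Suppose $r_k < s_j + x < r_{k+1}$ for some $k, j$; the other case $s_j + x < r_k < s_{j+1} + x$ follows by the symmetry interchanging $(r_i, \ell, f)$ with $(s_j + x, m, g)$ (the matching condition and the quantity being maximized are symmetric in the two shifted sequences, and the shift by $x$ is immaterial). Let $[j_1, j_2]$ be the largest interval of indices containing $j$ with $s_{j'} + x \in (r_k, r_{k+1})$ for all $j' \in [j_1, j_2]$; this is an interval since the shifts $s_{j'} + x$ increase, and by maximality of the interval together with $s_{j_1-1}+x < s_{j_1}+x < r_{k+1}$ we get $s_{j_1 - 1} + x \le r_k$ whenever $j_1 \ge 1$, and similarly $s_{j_2 + 1} + x \ge r_{k+1}$ whenever $j_2 \le m - 1$.

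If $j_1 = 0$ or $s_{j_1-1} + x < r_k$ (``slack at the bottom''), I would lower $s_{j_1}$ to exactly $r_k - x$. One checks the sequence stays strictly increasing and that $r_k$ was not previously in $M$, so the only change is that $(k, j_1)$ is added to $M$, raising the sum by $f(k)g(j_1) > 0$ --- contradiction. Symmetrically, if $j_2 = m$ or $s_{j_2+1} + x > r_{k+1}$, raising $s_{j_2}$ to $r_{k+1} - x$ adds $(k+1, j_2)$ and raises the sum by $f(k+1)g(j_2) > 0$. In the remaining case $s_{j_1-1} + x = r_k$ and $s_{j_2+1} + x = r_{k+1}$, so $k$ is matched to $j_1 - 1$ and $k+1$ to $j_2 + 1$, while the trapped indices $j_1, \dots, j_2$ are all unmatched. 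Now I would perform a ``swap'': nudging $s_{j_1-1}$ downward while moving $s_{j_1}$ down to $r_k - x$ replaces the pair $(k, j_1-1)$ in $M$ by $(k, j_1)$, changing the sum by $f(k)\bigl(g(j_1) - g(j_1-1)\bigr)$; symmetrically, a swap at the top changes it by $f(k+1)\bigl(g(j_2) - g(j_2+1)\bigr)$. From the ratio $g(j)/g(j-1) = \frac{(m-j+1)p}{j(1-p)}$ we get $g(j_1) > g(j_1-1)$ unless $(m+1)p \le j_1$, and $g(j_2) > g(j_2+1)$ unless $(m+1)p \ge j_2 + 1$; since $j_1 \le j_2$, both exceptions cannot hold, so at least one of the two swaps strictly increases the sum --- contradiction.

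The main obstacle is exactly this last ``tight'' case: there is genuinely no room to match the trapped indices $j_1,\dots,j_2$ without disturbing an existing matched pair, so positivity of the gain is not automatic and must come from the unimodality (log-concavity) of the binomial pmf, used in the form that $(m+1)p \le j_1$ and $(m+1)p \ge j_2 + 1$ are incompatible with $j_1 \le j_2$. The rest is routine bookkeeping: for each perturbation one verifies the $s$-sequence remains strictly increasing, identifies which $r$'s are (un)matched, and checks that no unintended pairs appear or disappear --- all easy, since any coordinate we move may be nudged to avoid the finitely many values that would create a coincidence $s_i + x = r_{i'}$ we do not want.
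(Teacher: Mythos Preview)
Your proof is correct and takes essentially the same approach as the paper's: both assume a violation and produce a local perturbation of the $s$-sequence that strictly increases the sum, using positivity of $f,g$ for the ``free'' move and unimodality of the binomial pmf $g$ for the swap. The case organization differs slightly---you isolate the maximal trapped interval $[j_1,j_2]$ and split into ``slack at an end'' versus ``tight at both ends'', while the paper first invokes unimodality at the trapped index $j_0$ to choose which end to attack and then splits into ``no existing match at that end'' versus ``swap''---but this is cosmetic.
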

\begin{proof}
Suppose for a contradiction that $r_{k_0} < s_{j_0} + x < r_{k_0+1}$ for some $k_0, j_0$. (We only prove the first part; the other part is identical.) Note that $g$ is unimodal, so it must be the case that either $g(j_0) > g(j_0-1) > \dots > g(0)$ or $g(j_0) > g(j_0+1) > \dots > g(m)$. Suppose that $g(j_0) > g(j_0-1) > \dots > g(0)$; the other case is similar (the situation is symmetric with respect to flipping the order of all indices). We now split into cases based on whether there exists $j$ such that $s_j + x = r_{k_0}$.

\textbf{Case 1:} There is no $j$ such that $s_j + x = r_{k_0}$. Let $j'$ be minimal such that $s_{j'} + x > r_{k_0}$. There is no $k$ such that $s_{j'} + x  = r_k$, since $s_{j'} + x \le s_{j_0}+x < r_{k_0+1}$. Then let $s'_{j'} = r_{k_0}-x$, and $s'_i = s_i$ for $i \neq j$. By minimality of $j'$, this preserves the condition $s'_0 < \dots < s'_m$. Also, replacing $s_i$ with $s'_i$ increases (\ref{ptprob2}) by $f(k_0)g(j') > 0$, contradicting maximality of (\ref{ptprob2}).

\textbf{Case 2:} There exists $j$ such that $s_j + x = r_{k_0}$. Note that $j < j_0$, so $s_{j+1} + x \le s_{j_0} + x < r_{k_0+1}$, so there is no $k$ such that $s_{j+1} + x = r_k$. Then, define $s'_{j+1} = s_j = r_{k_0} - x$, and let $s'_j$ be an arbitrary value between $s_{j-1}$ and $s_j$. Also let $s'_i = s_i$ for all $i \notin \{j, j+1\}$. This clearly preserves the condition $s'_0 < \dots < s'_m$, and the the value of (\ref{ptprob2}) evaluated at $s'_j$ instead of $s_j$ exceeds its original value by at least $f(k_0)(g(j+1) - g(j)) > 0$. This again contradicts maximality of (\ref{ptprob2}).
\end{proof}

\begin{claim} \label{matching}
Suppose that $r_k = s_j + x$ for some $k$. Then, for all $0 \le k' \le \ell$ and $0 \le j' \le m$ such that $k' - j' = k - j$, we also have $r_{k'} = s_{j'} + x$. 
\end{claim}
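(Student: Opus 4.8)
The plan is to deduce Claim \ref{matching} from Claim \ref{interleave} alone, with no further appeal to the maximality of (\ref{ptprob2}). The heart of the argument is a single local step: \emph{if $r_k = s_j + x$ and both $k+1 \le \ell$ and $j+1 \le m$, then $r_{k+1} = s_{j+1}+x$; and symmetrically, if $r_k = s_j + x$ and both $k-1 \ge 0$ and $j-1 \ge 0$, then $r_{k-1} = s_{j-1}+x$.} Granting this, Claim \ref{matching} follows by induction along the diagonal. Indeed, the set of $(k',j')$ with $0 \le k' \le \ell$, $0 \le j' \le m$, and $k'-j' = k-j$ is exactly $\{(k+t, j+t) : t \in \Z,\ \max(-k,-j) \le t \le \min(\ell-k,\, m-j)\}$, which contains $(k,j)$ (the case $t=0$, given). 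Starting from $t=0$ and increasing $t$ one unit at a time, each step is an instance of the upward local step: whenever $t \le \min(\ell-k,\,m-j)$ we have $k+(t-1) \le \ell-1$ and $j+(t-1) \le m-1$, so the index hypotheses are met. Decreasing $t$ uses the downward local step in the same way. Thus every pair on the diagonal is matched.

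To prove the upward local step, suppose $r_k = s_j+x$ with $k<\ell$ and $j<m$, and aim for $r_{k+1} = s_{j+1}+x$. Since the $r_i$ are strictly increasing, $s_j+x = r_k < r_{k+1}$; since the $s_i$ are strictly increasing, $r_k = s_j+x < s_{j+1}+x$. If $r_{k+1} < s_{j+1}+x$, then $s_j + x < r_{k+1} < s_{j+1}+x$, which is a configuration of the second forbidden type in Claim \ref{interleave} (with $r$-index $k+1$ and $s$-index $j$). If instead $r_{k+1} > s_{j+1}+x$, then $r_k < s_{j+1}+x < r_{k+1}$, a configuration of the first forbidden type (with $r$-index $k$ and $s$-index $j+1$). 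Either way we contradict Claim \ref{interleave}, so $r_{k+1} = s_{j+1}+x$. The downward local step is entirely analogous: from $r_k = s_j+x$, $k>0$, $j>0$, the options $r_{k-1} < s_{j-1}+x$ and $r_{k-1} > s_{j-1}+x$ yield the forbidden configurations $r_{k-1} < s_{j-1}+x < r_k$ (first type) and $s_{j-1}+x < r_{k-1} < s_j+x$ (second type), respectively.

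There is no real obstacle in this argument; the only points requiring care are bookkeeping and scope. First, one should verify that in the induction each use of a local step genuinely has both its $r$-index and its $s$-index in range — this holds because, along a fixed diagonal, $k+t$ and $j+t$ move monotonically together, so staying within the admissible interval for $t$ keeps both indices valid. Second, one should resist the temptation to prove only the upward step and invoke symmetry: reversing the roles of large and small indices changes $p$ into $1-p$ in the binomial weights, so the reversed sequences need not be maximizers of (\ref{ptprob2}); fortunately this is irrelevant, since both local steps follow directly from the two forbidden configurations of Claim \ref{interleave} and require nothing about maximality.
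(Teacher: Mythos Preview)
Your proof is correct and is essentially the same as the paper's: both argue by induction along the diagonal $k'-j'=k-j$, proving the single local step (in each direction) by observing that the two strict inequalities $r_{k+1}<s_{j+1}+x$ and $r_{k+1}>s_{j+1}+x$ produce exactly the two interleaving patterns forbidden by Claim~\ref{interleave}. Your write-up is a bit more explicit about the index bookkeeping and about why the downward step must be argued directly rather than by symmetry, but the underlying argument is identical.
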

\begin{proof}
First we show the statement for $k' \ge k, j' \ge j$ by induction. The base case $k'=k$ and $j'=j$ is given. Assume that $0 \le k' \le \ell$ and $0 \le j' \le m$ such that $k' - j' = k - j$, and suppose the hypothesis holds for $k' - 1$ and $j' - 1$. Since $r_i, s_j$ are increasing, we have that $r_{k'}$ and $s_{j'} + x$ are greater than $r_{k'-1} = s_{j'-1}+x$. By Claim \ref{interleave}, we cannot have $s_{j'-1}+x < r_{k'} <  s_{j'} + x$ or $r_{k'-1} < s_{j'}+x < r_{k'}$, so it must be the case that $r_{k'} = s_{j'}+x$, so the induction is complete.

Similarly, for $k' \le k$ and $j' \le j$, we induct downward. Again suppose that $0 \le k' \le \ell$ and $0 \le j' \le m$ such that $k' - j' = k - j$, and suppose the hypothesis holds for $k' + 1$ and $j' + 1$. Then $r_{k'}, s_{j'} + x$ are less than $r_{k'+1} = s_{j'+1} + x$. By Claim \ref{interleave}, we cannot have $s_{j'}+x < r_{k'} < s_{j'+1} + x$ or $r_{k'} < s_{j'}+x < r_{k'+1}$. Thus we must again have $r_{k'} = s_{j'}+x$, completing the induction. 
\end{proof}

From Claim \ref{matching}, it follows that the right hand side of (\ref{ptprob2}) takes the form
\begin{align*} 
\sum_{\substack{0 \le k \le \ell \\ 0 \le j \le m \\ k = j + d}} f(k) g(j).
\end{align*}
for some (not necessarily positive) integer $d$. But recalling the definitions of $f,g$, this is just equal to $\Pr(\Bin(\ell, p) - \Bin(m, p) = d)$, giving the desired upper bound, thus completing the proof of Theorem \ref{mainthm}.

\section{Determining the maximal \texorpdfstring{$\ell$}{l}} \label{maxl}
In this section we discuss the problem of determining the value of $\ell$ which maximizes the right hand side of (\ref{binbd}). By Theorem \ref{mainthm}, we assume throughout this section that $\ell$ of the $a_i$ are 1 and the other $m$ are $-1$ (where $\ell + m = n$). 

Let $\ell^*$ be a value of $\ell$ that maximizes the concentration probability $\max_{x \in \R} \Pr(X = x)$, and let $x^*$ be the optimal $x$. These may take multiple possible values; we will abuse notation by saying that $\ell^* = \ell, x^* = x$ to mean that $\ell, x$ are one choice that maximizes $\Pr(X = x)$.

\subsection{\texorpdfstring{$n$}{n} even}

When $n$ is even, one can use Fourier analysis to show that the concentration probability is maximized when half the $a_i$ are $1$ and the other half are $-1$. We use Theorem \ref{mainthm} to simplify the analysis, but this result can also be shown with only Fourier analysis similarly to \cite[Proposition 6.1]{cacia}.

\begin{thm} \label{evencase}
If $n$ is even, then $\ell^* = n/2, x^* = 0$. Equivalently,
\begin{align*}
\Pr(X = x) &\le \Pr(\Bin(n/2, p) - \Bin(n/2, p) = 0) \\
&= \sum_{k=0}^{n/2} \binom{n/2}{k} p^{2k}(1-p)^{n-2k}.
\end{align*}
\end{thm}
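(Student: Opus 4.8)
The plan is to prove the equivalent inequality directly: among all $\pm 1$ configurations — so that $X = \Bin(\ell, p) - \Bin(m, p)$ in distribution for some $\ell + m = n$ — and all $x$, the point probability $\Pr(X = x)$ is largest when $\ell = m = n/2$ and $x = 0$. The method is Fourier inversion on the circle, where the reduction to $\pm 1$ coefficients provided by Theorem \ref{mainthm} is exactly what makes the characteristic function factor nicely.

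Concretely, $X$ is integer-valued, with $2\pi$-periodic characteristic function
\[
\phi(t) = (1 - p + pe^{it})^\ell (1 - p + pe^{-it})^m,
\]
so Fourier inversion gives $\Pr(X = x) = \frac{1}{2\pi} \int_{-\pi}^{\pi} e^{-itx} \phi(t)\, dt$ for $x \in \Z$ (and $\Pr(X = x) = 0$ otherwise, where the claimed bound is trivial). By the triangle inequality,
\[
\Pr(X = x) \le \frac{1}{2\pi} \int_{-\pi}^{\pi} |\phi(t)|\, dt.
\]
The first step is to simplify $|\phi(t)|$. Since $p$ is real, $\overline{1 - p + pe^{it}} = 1 - p + pe^{-it}$, so the two factors of $\phi(t)$ have equal modulus, and hence $|\phi(t)| = |1 - p + pe^{it}|^{\ell + m} = |1 - p + pe^{it}|^{n}$, which no longer depends on the split $(\ell, m)$ or on $x$. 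This already reduces the problem to evaluating a single universal integral.

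The key step is to use that $n$ is even. Writing $n = 2s$ with $s = n/2$, and using $|z|^2 = z\bar z$,
\[
|1 - p + pe^{it}|^{n} = \bigl( |1 - p + pe^{it}|^2 \bigr)^{s} = \bigl( (1 - p + pe^{it})(1 - p + pe^{-it}) \bigr)^{s} = (1 - p + pe^{it})^{s}(1 - p + pe^{-it})^{s},
\]
and the right-hand side is precisely the characteristic function of $W := \Bin(n/2, p) - \Bin(n/2, p)$ (it is the product of the characteristic functions of $\Bin(s,p)$ and of an independent $-\Bin(s,p)$). Therefore $\frac{1}{2\pi}\int_{-\pi}^{\pi} |\phi(t)|\, dt = \Pr(W = 0)$ by Fourier inversion at $x = 0$, which gives $\Pr(X = x) \le \Pr(\Bin(n/2, p) - \Bin(n/2, p) = 0)$ for every $\ell$ and every $x$. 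Equality holds when $\ell = m = n/2$ and $x = 0$, so $\ell^* = n/2$ and $x^* = 0$; the closed form in the statement then follows from $\Pr(W = 0) = \sum_{k=0}^{n/2} \Pr(\Bin(n/2, p) = k)^2$.

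I do not expect a real obstacle here: once Theorem \ref{mainthm} has reduced us to $\pm 1$ coefficients, this is a short computation. The one conceptual point — and the precise reason the argument is special to even $n$ — is the observation that for even $n$ the nonnegative real function $|1 - p + pe^{it}|^{n}$ is \emph{itself} an honest characteristic function, namely that of $\Bin(n/2,p) - \Bin(n/2,p)$, which makes the crude triangle-inequality bound both explicitly evaluable and tight. For odd $n$ this step fails, and one only recovers a bound off by a $1 + o(1)$ factor, as noted in the introduction. The only things requiring care are the normalization in Fourier inversion on the circle and the trivial handling of non-integer $x$.
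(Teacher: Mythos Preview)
Your proof is correct and follows essentially the same approach as the paper: reduce to $\pm 1$ coefficients via Theorem~\ref{mainthm}, apply Fourier inversion and the triangle inequality to bound $\Pr(X=x)$ by an integral of $|1-p+pe^{it}|^n$, and observe that for even $n$ this bound is attained at $\ell=m=n/2$, $x=0$. The only cosmetic difference is that the paper works with the discrete Fourier transform over $\Z/N\Z$ for a prime $N>2n$, whereas you use the characteristic function on the circle; both are standard and lead to the same computation.
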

\begin{proof}
By Theorem \ref{mainthm}, it suffices to consider the case where $\ell$ of the $a_i$ are 1 and $m = n - \ell$ are $-1$. Let $N$ be a prime greater than $2n$; we will do Fourier analysis over $\Z/n\Z$. Since $-n \le X \le n$, the event $X = x$ is equivalent to the event $X \equiv x \pmod N$, for $-n \le x \le n$.

Now let $f = (1-p)\delta_0 + p\delta_1, g = (1-p)\delta_0 + p \delta_{-1}$ be the probability mass functions of $\xi_i, -\xi_i$, respectively. They have corresponding Fourier transforms
\begin{align*}
\hat f(k) &= (1-p) + pe^{-2\pi ik/N}, \\
\hat g(k) &= (1-p) + pe^{2\pi ik/N}.
\end{align*}
Note that $\hat f, \hat g$ are complex conjugates of each other. We then have, by Fourier inversion,
\begin{align*}
\Pr(X = x) &= f^{*\ell} * g^{*m}(x) \\
&= \frac 1N \sum_{k=0}^{N-1} e^{2\pi ixk/N} \hat f(k)^\ell \hat g(k)^m \\
&\le \frac 1N \sum_{k=0}^{N-1} |\hat f(k)|^\ell |\hat g(k)|^m \\
&= \frac 1N \sum_{k=0}^{N-1} |\hat f(k)|^n,
\end{align*}
and the last expression here is a constant that does not depend on $\ell$. Note that when $\ell = m = n/2$ and $x = 0$, the inequality is actually an equality, since all the terms of the sum are positive and real due to $\hat f$ and $\hat g$ being complex conjugates. Thus, $\Pr(X = x)$ is maximized when $\ell = n/2$, as desired.
\end{proof}

\subsection{\texorpdfstring{$p$}{p} small}

The case where $p$ is very small relative to $n$ is also easy, since the optimal $x$ will necessarily be 0, which allows us to use a convexity argument to find $\ell^*$.

\begin{lem}
If $p \le 1 - (1/2)^{1/n} = O(1/n)$, then $\ell^* = \lceil n/2 \rceil$ and $x^* = 0$. In other words, $\Pr(X = x)$ is maximized when $a_1, \dots, a_{\lceil n/2 \rceil} = 1$ and the other coefficients are $-1$, and $x = 0$.
\end{lem}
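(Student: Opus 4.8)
The plan is to first establish that when $p$ is this small, the optimal point of concentration is $x = 0$, and then use a convexity argument in $\ell$ to pin down $\ell^* = \lceil n/2\rceil$. For the first part, I would argue as follows. By Theorem~\ref{mainthm} we may assume $\ell$ of the $a_i$ are $1$ and $m = n-\ell$ are $-1$, so $X = \Bin(\ell,p) - \Bin(m,p)$ in distribution. For any integer $x$, write $\Pr(X = x) = \sum_{j} f(j+x) g(j)$ where $f(k) = \binom{\ell}{k}p^k(1-p)^{\ell-k}$ and $g(j) = \binom{m}{j}p^j(1-p)^{m-j}$ are the two binomial pmfs. The dominant contribution comes from the smallest values of $j$ (and $j+x$), since each additional factor of $p$ costs roughly a factor $O(np) = O(1)$ but actually, under the hypothesis $p \le 1-(1/2)^{1/n}$, we have $(1-p)^n \ge 1/2$, i.e.\ $\Pr(\text{all }\xi_i = 0) \ge 1/2$. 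So $\Pr(X = 0) \ge (1-p)^n \ge 1/2$, while for $x \ne 0$ the event $X = x$ forces at least one $\xi_i$ with a $+1$ coefficient (if $x > 0$) or at least $|x|$ of them, none of which can have probability mass concentrated at the all-zero outcome; one shows $\Pr(X = x) < 1/2 \le \Pr(X = 0)$ by a direct estimate. Hence $x^* = 0$.

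The heart of the argument is then to maximize $h(\ell) := \Pr(\Bin(\ell,p) - \Bin(n-\ell,p) = 0) = \sum_{k \ge 0} \binom{\ell}{k}\binom{n-\ell}{k} p^{2k}(1-p)^{n-2k}$ over $\ell \in \{0,1,\dots,n\}$. Fixing the term index $k$, the coefficient $\binom{\ell}{k}\binom{n-\ell}{k}$ is, as a function of $\ell$ over the integers (or extended to reals via the product $\ell(\ell-1)\cdots(\ell-k+1)\cdot(n-\ell)(n-\ell-1)\cdots(n-\ell-k+1)/(k!)^2$), symmetric about $\ell = n/2$ and log-concave / unimodal with its maximum at $\ell = \lceil n/2\rceil$ (and $\lfloor n/2 \rfloor$). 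The cleanest way to see this: $\binom{\ell}{k}\binom{n-\ell}{k}$ counts pairs $(S,T)$ of disjoint $k$-subsets with $S \subseteq [\ell]$, $T \subseteq [n]\setminus[\ell]$; equivalently it is the number of ways to choose an ordered pair of disjoint $k$-sets from $[n]$ together with a consistent split point, and one checks the ratio of consecutive values $\binom{\ell+1}{k}\binom{n-\ell-1}{k} / \binom{\ell}{k}\binom{n-\ell}{k} = \frac{(\ell+1)(n-\ell-k)}{(\ell-k+1)(n-\ell)}$ is $\ge 1$ iff $\ell \le \lceil n/2\rceil - 1$ (after clearing denominators this reduces to a linear inequality in $\ell$). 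Thus every term of the sum defining $h(\ell)$ is simultaneously maximized at $\ell = \lceil n/2 \rceil$, so $h$ is maximized there. Combined with $x^* = 0$, this gives $\ell^* = \lceil n/2\rceil$.

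The main obstacle is the first part — cleanly showing $x^* = 0$ — since unlike the even case there is no clean Fourier symmetry to exploit. The key quantitative input is exactly the hypothesis $(1-p)^n \ge 1/2$: it forces $\Pr(X=0)$ above $1/2$, and then any $x \ne 0$ must fail to beat this because $\Pr(X = x) \le \Pr(X \ne 0) = 1 - (1-p)^n \le 1/2$ when $x \neq 0$ and the all-zeros outcome contributes only to $X = 0$; one must be slightly careful that the inequality $\Pr(X=x) \le \Pr(X \neq 0)$ is not quite enough when $(1-p)^n = 1/2$ exactly, in which case one notes $\Pr(X = 0) \ge (1-p)^n + \ell p (1-p)^{n-1} \cdot (\text{correction})$ or simply that for $x \neq 0$ the event $\{X = x\}$ is a strict subset of $\{X \neq 0\}$ that also misses some nonzero configurations, so the inequality is strict. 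Making this last point rigorous, and handling the boundary $\ell \in \{0, n\}$ separately, is the only real care required; the convexity/unimodality step is routine once the ratio computation is set up.
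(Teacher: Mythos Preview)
Your proposal is correct and follows essentially the same approach as the paper. For $x^*=0$, both you and the paper use the observation $\Pr(X=0)\ge(1-p)^n\ge\tfrac12$, which forces $0$ to be a mode (your worry about the boundary case $(1-p)^n=\tfrac12$ is unnecessary, since a tie still makes $x^*=0$ a valid maximizer). For $\ell^*$, both arguments show the $k$th summand $\binom{\ell}{k}\binom{n-\ell}{k}$ is individually maximized at $\ell=\lceil n/2\rceil$; the paper does this by writing it as $(k!)^{-2}\prod_{i=0}^{k-1}(\ell-i)(n-\ell-i)$ and noting each quadratic factor peaks at $\ell=n/2$, while you use the equivalent ratio test. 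One small slip: you wrote $\Pr(X\ne 0)=1-(1-p)^n$, but that equality should be $\le$; the chain of inequalities still goes through.
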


\begin{proof}
Note that regardless of the choice of $\ell^*$, we have that $\Pr(X = 0) \ge \Pr(\xi_i = 0 \text{ for all } i) = \frac 12$. Thus, the highest point probability of $X$ must be at $0$, so $x^* = 0$. It remains to determine the value of $\ell$ that maximizes $\Pr(X = 0)$. 

Assume without loss of generality that $a_1, \dots, a_\ell = 1$ and the rest are $-1$. Then, we may compute $\Pr(X = 0)$ by casework on the number of positive and negative terms in the expression for $X$:

\begin{align}
\Pr(X = 0) &= \Pr(\# \{1 \le i \le \ell: \xi_i = 1\} = \# \{\ell < i \le n: \xi_i = 1\}) \nonumber \\
&= \sum_{k = 0}^n \Pr(\# \{1 \le i \le \ell: \xi_i = 1\} = k) \Pr(\# \{\ell < i \le n: \xi_i = 1\} = k) \nonumber \\
&= \sum_{k = 0}^n \binom \ell k p^k (1-p)^{\ell-k} + \binom{n-\ell} k p^k (1-p)^{n - \ell - k} \nonumber \\
&= \sum_{k = 0}^n p^{2k}(1-p)^{n-2k} \binom \ell k \binom{n-\ell} k \nonumber \\
&= \sum_{k = 0}^n p^{2k}(1-p)^{n-2k}(k!)^{-2} (\ell(n-\ell)) ((\ell-1)(n - \ell - 1)) \dots ((\ell - k) (n - \ell - k)). \label{pr0exp}
\end{align}
Now note that each term of the form $(\ell - i)(n - \ell - i)$ is maximized when $\ell = \lceil n/2 \rceil$. Thus, the entire expression (\ref{pr0exp}) is maximized when $\ell = \lceil n/2 \rceil$, as desired.
\end{proof}

\subsection{\texorpdfstring{$n$}{n} odd} \label{odd}

Finding $\ell^*$ turns out to be significantly harder when $n$ is odd and $p$ is not vanishingly small. We will show some partial results.

We consider only the case where $p$ is fixed and $n$ is large. In what follows, we will assume that $p$ is a fixed constant for the purposes of asymptotic notation. We also assume $p \neq \frac 12$, since in the $p = \frac 12$ case, any value of $\ell$ yields the same probability. Let $t = \ell - m = 2\ell - n$, and without loss of generality suppose $t > 0$, or equivalently $\ell > m$ (else flip the signs of all the $a_i$ and $x$). We will consider the maximization problem in terms of $t, x$ instead of $\ell, x$. Note that $t$ can take on any odd value between 0 and $n$ (inclusive). 

In this regime, it will again be helpful to use Fourier analysis. Again let $N$ be an arbitrary prime greater than $2n$; we use Fourier analysis over $\Z / N\Z$. Defining $f$ and $g$ as in the proof of Theorem \ref{evencase}, 
\begin{align*}
\Pr(X = x) &= f^{*\ell} * g^{*m}(x) \\
&= \frac 1N \sum_{k=-(N-1)/2}^{(N-1)/2} e^{2\pi ixk/N} \hat f(k)^\ell \hat g(k)^m \\
&= \frac 1N \sum_{k=-(N-1)/2}^{(N-1)/2} e^{2\pi ixk/N} (1-p+pe^{-2\pi i k/N})^\ell (1-p+pe^{2\pi i k/N})^m.
\end{align*}
Taking the limit as $N \rightarrow \infty$ (taking prime values only) and noting that $\Pr(X = x)$ is real, we thus have
\begin{align}
\Pr(X = x) &= \frac{1}{2\pi} \int_{-\pi}^{\pi} e^{ixy} (1-p+pe^{-iy})^\ell  (1-p+pe^{iy})^m \, dy \nonumber \\
&= \frac{1}{2\pi} \int_{-\pi}^{\pi} \Re\left(e^{ixy} (1-p+pe^{-iy})^\ell  (1-p+pe^{iy})^m\right) \, dy \nonumber \\
&= \frac{1}{\pi} \int_{0}^{\pi} \Re\left(e^{ixy} (1-p+pe^{-iy})^\ell  (1-p+pe^{iy})^m\right) \, dy \nonumber \\
&= \frac{1}{\pi} \int_{0}^{\pi} 
\left| 1-p+pe^{-iy} \right|^n
\cos \left( xy + t \arg(1-p+pe^{-iy}) \right)
\, dy. \nonumber \\
&= \frac{1}{\pi} \int_{0}^{\pi} 
\left| 1-p+pe^{-iy} \right|^n \, dy 
\nonumber \\& \qquad -
\frac{1}{\pi} \int_{0}^{\pi} 
\left| 1-p+pe^{-iy} \right|^n
\left(1-\cos \left( xy + t \arg(1-p+pe^{-iy}) \right)\right)
\, dy. \label{qderiv}
\end{align}
The first integral on the right hand side depends only on $p$ and $n$ (and not $x$ or $t$), so maximizing $\Pr(X = x)$ is equivalent to minimizing the following function of $t, x$.
\begin{equation}
q(t, x) = 
\int_{0}^{\pi} 
\left| 1-p+pe^{-iy} \right|^n
\left(1-\cos \left( xy + t \arg(1-p+pe^{-iy}) \right)\right) 
\, dy. \label{qexp}
\end{equation}

Approximating $q(t, x)$ involves technical estimation; we defer it to Appendix \ref{app} and restate the results here. We henceforth assume that $p$ is not any of $0, \frac 12, 1$, since these cases are trivial (in all three cases, all choices for $\ell$ give the same concentration probability). From (\ref{qestapp}) and Lemma \ref{closetomean}, we have that if $x$ maximizes $\Pr(X = x)$ (or if $|x - tp| < n^{0.01}$), then
\begin{align} \label{qest1}
q(t, x) &= (1+o(1))\frac{\sqrt \pi}{32} (an)^{-7/2}b^2 (4u^2 + 12ut + 15t^2),
\end{align}
where we have defined the constants $a = \frac 12 p(1-p), b = \frac 16 (p - 3p^2 + 2p^3)$, and we have also defined
\begin{align} \label{udef}
u = u(t, x) = \frac ab n (x-tp).
\end{align}
For conciseness, define also $c = \frac{\sqrt \pi}{32} (an)^{-7/2}b^2$ to be the term in (\ref{qest1}) that does not depend on $x, t$ (though it does depend on $n$). Thus, if $x$ maximizes $\Pr(X = x)$ (or if $|x - tp| < n^{0.01}$), then
\begin{align} \label{qest}
q(t, x) &= c (1+o(1)) (4u^2 + 12ut + 15t^2).
\end{align}
This also implies that if $x$ maximizes $\Pr(X = x)$, then
\begin{align} \label{qestbd}
q(t, x) &\ge c (1+o(1)) \max(12u^2/5, 6t^2).
\end{align}
We are now equipped to show some results about the behavior of $\ell^*$ for large $n$. 
 
\begin{prop} \label{odddenom}
If $p = \frac rs$ for relatively prime positive integers $r, s$ and $s$ is odd, then for sufficiently large odd $n$, $\ell^* = \frac{n+s}2, x^* = r$.
\end{prop}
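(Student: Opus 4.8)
The plan is to work entirely with the function $q(t,x)$ from (\ref{qexp}) and the asymptotic formula (\ref{qest}); recall that maximizing $\Pr(X=x)$ is equivalent to minimizing $q(t,x)$ over odd $t$ with $0 < t \le n$ (taking $t > 0$ by the sign-flip symmetry $a_i \mapsto -a_i$, $x \mapsto -x$) and over $x \in \Z$. Fix a minimizing pair $(t^*, x^*)$ with $t^* > 0$; since $n$ is odd, $t^* = 2\ell^* - n$ is odd, and since $x^*$ maximizes $\Pr(X=x)$ for $t = t^*$, the estimate (\ref{qest}) applies with $u^* = u(t^*, x^*) = \frac ab n(x^* - t^*p)$. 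Because $p = r/s$ we have $sp = r \in \Z$, so the pair $(t,x) = (s,r)$ is admissible: it corresponds to $\ell = (n+s)/2$, an integer since $n$ and $s$ are both odd, with $\ell \le n$ once $n \ge s$. Moreover $u(s,r) = \frac ab n(r - sp) = 0$, so (\ref{qest}) gives $q(s,r) = 15c(1+o(1))s^2$, and hence the minimum obeys $q(t^*, x^*) \le 15c(1+o(1))s^2$.

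I would then use this bound to locate $(t^*, x^*)$. Completing the square, $4u^2 + 12ut + 15t^2 = 4(u + \tfrac32 t)^2 + 6t^2$, so (\ref{qest}) reads $q(t^*, x^*) = c(1+o(1))\big(4(u^* + \tfrac32 t^*)^2 + 6(t^*)^2\big)$. Comparing with the bound from the first paragraph yields $6(t^*)^2 \le 15 s^2(1+o(1))$ and $4(u^* + \tfrac32 t^*)^2 \le 15 s^2(1+o(1))$, so both $t^*$ and $u^*$ are bounded by a constant depending only on $p$. (This is a mild bootstrap: the first inequality confines $t^*$ to a fixed finite set of odd integers, after which the uniform $o(1)$ in (\ref{qest}) is unambiguous for the pair $(t^*, x^*)$.) In particular $|x^* - t^*p| = \frac{|b|}{an}|u^*| = O(1/n)$.

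The crux is the number-theoretic step. Since $\gcd(r,s) = 1$, if $s \nmid t^*$ then $s \nmid t^* r$, so $t^* p = t^* r/s$ lies at distance at least $1/s$ from every integer; as $x^* \in \Z$ this forces $|x^* - t^*p| \ge 1/s$, contradicting $|x^* - t^*p| = O(1/n)$ for large $n$. Hence $s \mid t^*$, so $t^*p \in \Z$, and then $|x^* - t^*p| = O(1/n) < 1$ with $x^*, t^*p$ both integers gives $x^* = t^*p$, that is, $u^* = 0$. Substituting $u^* = 0$ into (\ref{qest}) gives $q(t^*, x^*) = 15c(1+o(1))(t^*)^2$, whence $(t^*)^2 \le s^2(1+o(1))$ by the first paragraph. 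But $t^*$ is a positive odd multiple of $s$, so $t^* \ge s$; for large $n$ this forces $t^* = s$, giving $x^* = t^*p = r$ and $\ell^* = (n+t^*)/2 = (n+s)/2$, as desired.

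The bulk of the difficulty is hidden in the estimate (\ref{qest}), which is taken as given (it is established in Appendix \ref{app}); granting it, the argument above is essentially bookkeeping, the one point requiring care being the bootstrap noted in the second paragraph. Conceptually, the reason the proposition holds — and the reason $\ell^*$ can be far from $n/2$ — is precisely that integrality of $x$ forces $t^*p$ to be an integer, hence $s \mid t^*$, so the smallest admissible $|t|$ is $s$ rather than $1$.
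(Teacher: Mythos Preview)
The proposal is correct and follows essentially the same approach as the paper: both evaluate $q(s,r)$ via (\ref{qest}) using $u(s,r)=0$, then exploit the integrality constraint $x - tp \in \tfrac{1}{s}\Z$ to force $u^*=0$ and hence $t^*=s$. Your organization differs only cosmetically (you first bound $t^*$ and $u^*$ by constants and then deduce $u^*=0$, whereas the paper splits directly into the cases $x\neq tp$ and $x=tp$), and you correctly retain the factor $15$ that the paper's writeup silently drops; the substance is the same.
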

\begin{proof}
If $\ell = \frac{n+s}2$, then $t = s$, so it suffices to show that $q(t, x) \ge q(s, r)$ for all $x, t$. Suppose that for some $t, x$ we have $q(t, x) < q(s, r)$. We may also further assume that $x$ maximizes $\Pr(X = x)$, so that the estimate (\ref{qest}) holds. By (\ref{qest}), noting that $u(s, r) = 0$, 
\begin{align*}
q(s, r) = c (1+o(1)) s^2.
\end{align*}

Now, if $x \neq tp$, then $u(t, x) = \Omega(n)$, in which case by (\ref{qestbd}), $q(t, x) = \Omega(cn^2) > q(s, r)$.

On the other hand, if $x = tp$, then $t \ge s$, since $tp$ is an integer. If $t = s$, then $x = r$ and $q(t, x) = q(s, r)$. But if $t > s$, then by (\ref{qest}) $q(t, x) = c(1 + o(1))t^2 > q(s, r)$ for sufficiently large $n$.

Thus, if $n$ is sufficiently large, then $q(t, x) \ge q(s, r)$ for all $t, x$, as desired.
\end{proof}

\begin{prop} \label{evendenom}
If $p = \frac rs < 1$ for relatively prime positive integer $r, s$ and $s$ is even, then for odd $n$, $\ell^* = \left(\frac 12 + \frac{3}{5|1-2p|s} + o(1)\right)n$.
\end{prop}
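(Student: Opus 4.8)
The plan is to minimize the function $q(t,x)$ of (\ref{qexp}) over odd $t$ with $0 < t \le n$ and $x \in \Z$, since by (\ref{qderiv}) this is equivalent to maximizing $\Pr(X = x)$ and so locates $\ell^{*} = (n+t)/2$. Throughout I use the estimate (\ref{qest}), namely $q(t,x) = c(1+o(1))(4u^{2} + 12ut + 15t^{2})$, which is valid whenever $x$ maximizes $\Pr(X = x)$ or $|x - tp| < n^{0.01}$; here $u = \tfrac{a}{b} n(x - tp)$. Since $a = \tfrac12 p(1-p)$ and $b = \tfrac16 p(1-p)(1-2p)$, we have $\tfrac{a}{b} = \tfrac{3}{1-2p}$, so $u = \tfrac{3n}{1-2p}(x - tp)$.

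The arithmetic input is this. Because $\gcd(r,s) = 1$ and $s$ is even while $t$ is odd, we have $s \nmid t$, so $tp = tr/s \notin \Z$ and in fact $\min_{x \in \Z} |x - tp| = d_{t}/s$, where $d_{t} \in \{1, \dots, s/2\}$ is the distance from $tr$ to the nearest multiple of $s$. Hence $|u| \ge \tfrac{3n}{|1-2p|s}$ for every admissible pair, with equality exactly when $tr \equiv \pm 1 \pmod s$ and $x$ is the integer nearest $tp$. Since $s$ is even, $r$ is odd, so $r^{-1} \bmod s$ is odd, and therefore both residue classes $t \equiv \pm r^{-1} \pmod s$ consist entirely of odd integers; moreover, for one choice of the sign the nearest integer $x$ to $tp$ satisfies $x - tp = \mp 1/s$, giving $u = -\tfrac{3n}{|1-2p|s}$, of sign opposite to $t > 0$.

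Now complete the square: $4u^{2} + 12ut + 15t^{2} = 15\bigl(t + \tfrac{2u}{5}\bigr)^{2} + \tfrac{8u^{2}}{5} \ge \tfrac{8u^{2}}{5} \ge (1+o(1))\tfrac{72 n^{2}}{5(1-2p)^{2}s^{2}}$, so $q(t,x) \ge (1+o(1))\,c\,\tfrac{72 n^{2}}{5(1-2p)^{2}s^{2}} =: q^{*}$ for every admissible $(t,x)$. Conversely, choosing an odd $t$ with $t \equiv \pm r^{-1} \pmod s$ (the sign as above) and $t$ within $O(1)$ of $\tfrac{6n}{5|1-2p|s}$, together with $x$ the integer nearest $tp$, yields $u = -\tfrac{3n}{|1-2p|s}$ and $t + \tfrac{2u}{5} = O(1)$, so the square term contributes only $O(1) = o(n^{2})$ and $q(t,x) = (1+o(1))q^{*}$. (Such a $t$ exists in the admissible range since $\tfrac{6n}{5|1-2p|s} = \tfrac{6n}{5|s-2r|} \le \tfrac{3n}{5} < n$, using that $|s-2r|$ is a positive even integer because $p \ne \tfrac12$.) Thus $q^{*}$ is the minimum up to a $1+o(1)$ factor. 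Finally, if $(t,x)$ is optimal then $q(t,x) = (1+o(1))q^{*}$, and the chain of inequalities above forces both $\tfrac{8u^{2}}{5} = (1+o(1))\tfrac{8}{5}\bigl(\tfrac{3n}{|1-2p|s}\bigr)^{2}$ (so $d_{t} = 1$, and $u = -(1+o(1))\tfrac{3n}{|1-2p|s}$ since $t > 0$) and $15\bigl(t + \tfrac{2u}{5}\bigr)^{2} = o(n^{2})$, i.e. $t = -\tfrac{2u}{5} + o(n) = (1+o(1))\tfrac{6n}{5|1-2p|s}$. Therefore $\ell = \tfrac{n+t}{2} = \bigl(\tfrac12 + \tfrac{3}{5|1-2p|s} + o(1)\bigr)n$.

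The main obstacle is the arithmetic step of the second paragraph: pinning $|u|$ down to its minimum $\tfrac{3n}{|1-2p|s}$ and verifying that this value is realized by an \emph{odd} $t$ in the correct residue class mod $s$ and with the correct sign of $x - tp$. This is precisely what collapses the two-variable optimization into a one-variable quadratic in $t$ with $|u|$ fixed, after which the cross term $12ut$ pushes the optimal $t$ to $\tfrac{2|u|}{5}$ rather than to $0$; everything else is completing the square and bookkeeping the $1+o(1)$ errors coming from (\ref{qest}).
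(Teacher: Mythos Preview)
Your proof is correct and follows essentially the same strategy as the paper: use the estimate (\ref{qest}), pin down the discrete minimum of $|u|$ over integer $x$ and odd $t$ (namely $|u|=\tfrac{3n}{|1-2p|s}$, since $s\nmid t$), and then optimize the quadratic $4u^{2}+12ut+15t^{2}$ in $t$ at that minimal $|u|$ with the favorable sign. Your completion of the square $4u^{2}+12ut+15t^{2}=15\bigl(t+\tfrac{2u}{5}\bigr)^{2}+\tfrac{8u^{2}}{5}$ and the explicit verification that an odd $t$ with $d_{t}=1$ and the correct sign of $x-tp$ exists (using that $r^{-1}\bmod s$ is odd) are in fact tidier than the paper's corresponding steps.
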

\begin{proof}
We again maximize $q(t, x)$ according to the expression in (\ref{qest}).
 
There must be some odd congruency class $k$ such that if $t \equiv k \pmod s$, then there exists $x$ such that $x - tp = \pm \frac 1s$, where the sign is chosen to be the opposite of that of $\frac ab$, so that $u = - \frac {|a|n}{|b|s}$. We can further pick $t \equiv k \pmod s$ so that $t = -\frac 25 u + O(1)$. (Note that $-\frac 25 u = \frac{2|a|n}{5|b|s} = \frac{6n}{5|1-2p|s} = \frac{6n}{5|s-2r|} \le \frac{3n}{5}$ since $s$ is even and $p \neq \frac 12$, so $t$ is indeed between 0 and $n$ as required.) Then, by (\ref{qest}), for this choice of $t, x$, $q(t, x)$ is equal to
\begin{align} \label{opteven}
c(1+o(1)) \frac{12a^2n^2}{5b^2s^2}.
\end{align}

Now suppose that $t, x$ minimize $q(t, x)$. We then know that $q(t, x)$ is at most the quantity in (\ref{opteven}). Since $p$ is an integer multiple of $\frac 1s$, $u$ must be an integer multiple of $\frac{an}{bs}$ (by the definition of $u$ in (\ref{udef}). If $|u| \ge \frac{2|a|n}{|b|s}$, then by (\ref{qestbd}), $q(t, x) \ge c(1+o(1))\frac{48a^2n^2}{5b^2s^2}$, which is greater than (\ref{opteven}) for sufficiently large $n$. Thus, we must have $|u| = \frac{|a|n}{|b|s}$. If $u = \frac{|a|n}{|b|s}$, then all terms of $4u^2 + 12ut + 15t^2$ are positive, so the estimate (\ref{qest}) implies that $q(t, x) \ge c(1+o(1))(4u^2) = c(1+o(1))\frac{4a^2n^2}{b^2s^2}$, which is again greater than (\ref{qestbd}).

Thus we must have (still for sufficiently large $n$) $u = - \frac{|a|n}{|b|s}$. Now, again applying the estimate (\ref{qest}),
\begin{align*}
q(t, x) &= c (1+o(1)) \left(\frac{12}{5} u^2 + \left(t + \frac 25 u\right)^2 \right) \\
&= c (1+o(1)) \left(\frac{12a^2n^2}{5b^2s^2} + \left(t - \frac{2|a|n}{5|b|s}\right)^2. \right)
\end{align*}
Since this must be at most ($\ref{qestbd}$), we must have
\begin{align*}
t &= (1+o(1)) \frac{2|a|n}{5|b|s} \\
&= (1+o(1)) \frac{6n}{5|1-2p|s}.
\end{align*}
Thus,
\begin{align*}
\ell^* &= \frac{n+t}{2} \\
&= \left(\frac 12 + \frac{3}{5|1-2p|s} + o(1)\right)n,
\end{align*}
completing the proof.
\end{proof}

\begin{prop}
If $p$ is irrational, then for odd $n$, $\ell^* = \left(\frac 12 + o(1)\right)n$.
\end{prop}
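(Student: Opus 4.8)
The plan is to combine the lower bound (\ref{qestbd}) for $q$ at the optimum with an explicit construction of a near-optimal pair $(t,x)$ coming from the density of the odd multiples of $p$ modulo $1$. Recall that minimizing $q(t,x)$ over odd $t$ with $0<t\le n$ and integers $x$ is equivalent to maximizing $\Pr(X=x)$, and that if $(t^*,x^*)$ is a minimizer then (\ref{qestbd}) gives $q(t^*,x^*)\ge c(1+o(1))\cdot 6(t^*)^2$. Hence it suffices to show that $\min_{t,x} q(t,x)=o(cn^2)$: this forces $(t^*)^2=o(n^2)$, i.e.\ $t^*=o(n)$, and therefore $\ell^*=\frac{n+t^*}{2}=\left(\frac12+o(1)\right)n$.

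To bound the minimum of $q$ I would exhibit, for each large odd $n$, an odd integer $t(n)=o(n)$ and an integer $x(n)$ with $q(t(n),x(n))=o(cn^2)$. Since $p$, and hence $2p$, is irrational, the sequence $\big((2k+1)p \bmod 1\big)_{k\ge 1}$ is dense (indeed equidistributed) in $[0,1)$; in particular, for every $\varepsilon>0$ there are infinitely many odd positive integers $t$ with $\|tp\|<\varepsilon$, where $\|\cdot\|$ denotes distance to the nearest integer. Thus one can choose a strictly increasing sequence of odd positive integers $t_1<t_2<\cdots$ with $\|t_j p\|<1/j$. Given $n$, set $j(n)=\max\{j:t_j\le \sqrt n\}$, which is well defined for large $n$ and satisfies $j(n)\to\infty$ (for any fixed $J$ we have $t_J\le\sqrt n$ once $n\ge t_J^2$). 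Then $t(n):=t_{j(n)}$ is odd, satisfies $t(n)\le\sqrt n=o(n)$, and has $\|t(n)p\|<1/j(n)=o(1)$. Let $x(n)$ be the nearest integer to $t(n)p$, so $|x(n)-t(n)p|=\|t(n)p\|=o(1)<n^{0.01}$, which is precisely the hypothesis under which the estimate (\ref{qest}) is valid.

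Applying (\ref{qest}) to this pair gives $q(t(n),x(n))=c(1+o(1))\big(4u^2+12u\,t(n)+15\,t(n)^2\big)$ with $u=\frac ab\,n\,(x(n)-t(n)p)$, so that $|u|=\frac{|a|}{|b|}\,n\,\|t(n)p\|=o(n)$. Since $t(n)=o(\sqrt n)$ and $u=o(n)$, each of $u^2$, $u\,t(n)$, and $t(n)^2$ is $o(n^2)$, whence $q(t(n),x(n))=o(cn^2)$, as required; combined with the first paragraph this finishes the proof.

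The step I expect to be the main obstacle is precisely the simultaneous requirement that the near-optimal $t$ be both small compared with $n$ and such that $tp$ is extremely close to an integer. Unlike the rational cases treated in Propositions \ref{odddenom} and \ref{evendenom}, there is no fixed good value of $t$: irrationality of $p$ gives no control on how fast the odd multiples of $p$ approach $0 \bmod 1$. The diagonalization $t(n)=t_{j(n)}$ with the cutoff $t_j\le\sqrt n$ is what reconciles these two demands, and it uses only that $\liminf_{t\to\infty,\ t\ \mathrm{odd}}\|tp\|=0$, which is all that irrationality of $p$ provides.
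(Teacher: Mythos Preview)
Your proof is correct and follows essentially the same route as the paper: use density of the odd multiples of $p$ modulo $1$ to construct a pair $(t,x)$ with small $q$, then combine with the lower bound (\ref{qestbd}) at the optimum to force $t^*=o(n)$. The only differences are cosmetic---the paper fixes $\varepsilon$, picks a constant $t_\varepsilon$, and lets $\varepsilon\to 0$ at the end rather than diagonalizing explicitly as you do---and you have a harmless slip writing ``$t(n)=o(\sqrt n)$'' where what you established is $t(n)\le\sqrt n$ (the needed conclusion $t(n)^2=o(n^2)$ is unaffected).
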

\begin{proof}
Fix $\eps > 0$ ($\eps$ will be considered a constant for the purposes of asymptotic notation). Since the fractional part of $2kp + p$, as $k$ ranges over the nonnegative integers, is dense in $[0, 1)$, there exists some odd positive $t_\eps$ (which depends only on $\eps$ and $p$, not $n$) such that the fractional part of $t_\eps p$ is less than $\frac{|b|}{|a|}\eps$. We can thus find $x_\eps$ such that $|x_\eps - t_\eps p| < \frac{|b|}{|a|} \eps$. Then, $|u(t_\eps, x_\eps)| < \eps n$. Then, again applying the estimate (\ref{qest}),
\begin{align*}
q(t_\eps, x_\eps) &\le c (1 + o(1)) (10u(t_\eps, x_\eps)^2 + 21 t_\eps^2) \\
&\le c (1 + o(1)) (10 \eps^2 n^2 + 21 t_\eps^2).
\end{align*}
Now, let $t, x$ be the values that minimize $q(t, x)$. By (\ref{qestbd}),
\begin{align*}
q(t_\eps, x_\eps) &\ge q(t, x) \\
&\ge c (1 + o(1)) (6t^2).
\end{align*}
Combining these two inequalities,
\[c (1 + o(1)) (6t^2) \le c (1 + o(1)) (10 \eps^2 n^2 + 21 t_\eps^2),\]
so
\[t \le (1 + o(1)) \sqrt{\frac{1}{6}(10 \eps^2 n^2 + 21 t_\eps^2)}.\]
This is true for all $\eps$, so $t = o(n)$. Therefore, $\ell^* = \frac{n + t}{2} = \left(\frac 12 + o(1)\right)n$, as desired.
\end{proof}

\section{Concluding remarks} \label{concl}
We have shown that in order to maximize the concentration probability $\max_{x \in \R} \Pr(X = x)$, it must be the case that the $a_i$ are all 1 or $-1$. We then showed a sequence of results on $\ell^*$, the number of $a_i$ which are 1 in the maximal case. The results are summarized in the table below.

\begin{figure}[h] 
\centering
\bgroup
\def\arraystretch{1.3}
\begin{tabular}{|c|c|c|}
\hline
$n$ & $p$ & $\ell^*$ \\
\hline \hline
$n$ even & any $p$ & $\frac n2$ \\
\hline
$n$ odd, sufficiently large & $p = \frac rs$, $s$ odd & $\frac{n+s}2$ \\
\hline
$n$ odd, sufficiently large & $p = \frac rs$, $s$ even & $\left(\frac 12 + \frac{3}{5|1-2p|s} + o(1)\right)n$ \\
\hline
$n$ odd, sufficiently large & $p$ irrational & $\left(\frac 12 + o(1)\right)n$ \\
\hline
any $n$ & $p$ sufficiently small & $\left\lceil \frac n2 \right\rceil$ \\
\hline
\end{tabular}
\egroup
\caption{Results for $\ell^*$ in various cases}
\label{fig:lstarresults}
\end{figure}

Some questions remain. The problem of determining $\ell^*$ when $n$ is small is still open. Additionally, even when $n$ is large, there remain $o(1)$ terms when $p$ is not a rational with odd denominator. In particular, when $p$ is rational with even denominator, computer tests seem to indicate that for sufficiently large $n$, $\ell^*$ is the sum of a linear and a periodic function of $n$, allowing the exact value to be determined for sufficiently large $n$. We thus make the following conjecture, which is stronger than Proposition \ref{evendenom}.

\begin{conj}
If $p$ is rational with even denominator, then for sufficiently large odd $n$, the function $\ell^* - \left(\frac 12 + \frac{3}{5|1-2p|s}\right)n$ is periodic in $n$. In particular, $\ell^* = \left(\frac 12 + \frac{3}{5|1-2p|s}\right)n + O(1)$.
\end{conj}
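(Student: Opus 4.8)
The plan is to replace the quadratic approximation (\ref{qest}) for $q(t,x)$ (defined by (\ref{qexp})) with an asymptotic expansion accurate to one or two further orders; this suffices both to pin down the minimizing $t$ to within $O(1)$ and to see that the resulting bounded correction is eventually periodic in $n$. Rescaling $y = w/\sqrt n$ in (\ref{qexp}) and applying Laplace's method around the unique maximum $y = 0$ of $|1-p+pe^{-iy}|$ on $[0,\pi]$, one obtains an expansion
\[
q(t,x) \;=\; \sum_{K\ge 1} n^{-1/2-K}\, c_K\!\left(\frac un,\ \frac tn\right),
\]
in which $c_K(\rho,\sigma)$ is an explicit polynomial whose homogeneous components have degrees $2,4,\dots,2K$, arising from expanding both $\log|1-p+pe^{-iy}|$ and the phase $xy + t\arg(1-p+pe^{-iy})$ in powers of $y$ and integrating against the Gaussian $e^{-aw^2}$. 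Here $c_1$ is a positive multiple of $4\rho^2+12\rho\sigma+15\sigma^2$ (which recovers (\ref{qest}) on setting $\rho = u/n$, $\sigma = t/n$), and the genuinely new inputs are the two homogeneous pieces of $c_2$ together with a uniform bound on the tail $\sum_{K\ge 3}$. By Lemma \ref{closetomean} the relevant $x$ has $|x-tp|<n^{0.01}$, so $u = O(n^{1.01})$ and the expansion is valid; in fact for the conjecture we only need it on a bounded window around the optimum, so the required uniformity is mild.

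For the discrete optimization, write $p = r/s$ with $s$ even, and note $s\ge 4$ (since $s=2$ would force $p=\frac12$). Exactly as in the proof of Proposition \ref{evendenom}, the leading term $c_1$ forces a minimizing pair to have $u = u^* := -\frac{|a|n}{|b|s}$ (the smallest admissible value of $|u|$, with the sign making $12ut<0$) and $t$ confined to the single residue class $k^*\pmod s$ identified there (with $k^* \equiv \pm r^{-1}$, the sign of $r^{-1}$ determined by that of $1-2p$); any other class loses by a constant factor, which the $1+O(1/n)$ corrections cannot recover. Crucially, $u^*$ and the leading-order optimizer $t_0 := -\frac25 u^* = \frac{6n}{5|1-2p|s}$ are \emph{exactly} proportional to $n$, so $\rho^* := u^*/n$ and $\sigma^* := t_0/n$ are constants depending only on $p$ and $s$. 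Substituting $u = u^*$ into the expansion and writing $t = t_0 + \delta$: since $\partial_\sigma c_1(\rho^*,\sigma^*)=0$, the term $c_1$ contributes, beyond a $\delta$-independent constant, only a $\delta$-quadratic term $15c\,\delta^2$; the term $c_2$ contributes a $\delta$-linear term $\mu c\,\delta$ (with $\mu$ a constant determined by the derivatives of $c_1, c_2$ at $(\rho^*,\sigma^*)$) plus $O(n^{-9/2}(1+\delta^2))$; and the terms with $K\ge 3$ are $O(n^{-9/2}(1+\delta^2))$. Hence
\[
q(t_0+\delta,\ x^*) \;=\; \mathrm{const}(n) \;+\; c\,\bigl(15\,\delta^2 + \mu\,\delta\bigr) \;+\; O\!\bigl(n^{-9/2}(1+\delta^2)\bigr),
\]
with $c = \frac{\sqrt\pi}{32}(an)^{-7/2}b^2$ as in (\ref{qest}). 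So on a bounded window the objective is a genuine convex parabola, with continuous minimizer $t_{\mathrm{true}}(n) = t_0 - \mu/30 + O(1/n) = \frac{6n}{5|1-2p|s} + O(1)$; consequently the discrete minimizer $t_{\min}(n)$ is the element of $k^* + s\Z$ nearest $t_{\mathrm{true}}(n)$, so $t_{\min}(n) - \frac{6n}{5|1-2p|s}$ equals a fixed constant plus the value at $\frac{6n}{5|1-2p|s}$ of a bounded period-$s$ sawtooth, and is therefore eventually periodic in $n$. Since $\ell^* = (n+t_{\min})/2$, we get $\ell^* - \bigl(\frac12 + \frac{3}{5|1-2p|s}\bigr)n = \frac12\bigl(t_{\min}(n) - \frac{6n}{5|1-2p|s}\bigr)$, which is then eventually periodic, giving the conjecture.

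The main obstacle is two-fold. The first part is carrying out the higher-order Laplace expansion: because the phase $xy + t\arg(1-p+pe^{-iy})$ has coefficients of order $n$ (so that, although the phase is of size $n^{-1/2}$ on the dominant scale $y \sim n^{-1/2}$, many monomials of the phase- and log-expansions feed into each $c_K$), one must track these contributions precisely and prove the tail $\sum_{K\ge 3}$ is $o(c)$ uniformly; this is laborious but is a direct continuation of the estimates in Appendix \ref{app}. The second, and subtler, part is the treatment of \emph{ties}: the rounding in the last step is ill-defined exactly when $t_{\mathrm{true}}(n)$ lies within $O(1/n)$ of a half-lattice point of $k^* + s\Z$, and such $n$ either never occur or form a single residue class modulo the period of the sawtooth. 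For those $n$ one must break the tie using the next coefficient $c_3$ and show that the sign of the relevant difference is eventually constant along that residue class --- again because it is a fixed polynomial evaluated along an arithmetic progression. Only after this (and under the paper's convention that $\ell^*$ denotes one chosen optimal value) is the eventual periodicity of $\ell^*$ established.
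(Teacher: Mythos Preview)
This statement is presented in the paper as a \emph{conjecture}, not a theorem: the paper offers no proof at all, only the remark that computer tests suggest eventual periodicity and that this would strengthen Proposition~\ref{evendenom}. So there is no paper proof to compare your attempt against.

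As a strategy, your outline is sensible and goes in the natural direction: push the Laplace expansion of Appendix~\ref{app} one order further, observe that the leading correction shifts the continuous optimum by an $n$-independent constant, and then read off the discrete optimum as a nearest-point-in-a-coset problem whose answer is periodic in $n$. The structural claims you make about the expansion (that the phase is $O(n^{-1/2})$ on the dominant scale, that $c_1$ recovers~(\ref{qest}), and that $c_2$ contributes a linear-in-$\delta$ shift at order $n^{-7/2}$) are correct, and your restriction to a single residue class $k^*\pmod s$ with $u=u^*$ is justified by the constant-factor gaps already visible in Proposition~\ref{evendenom}.

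That said, what you have is a program, not a proof, and you yourself flag the two real obstacles. I would add two cautions. First, your tie-breaking step (``use $c_3$'') may not terminate at $c_3$: if the two candidate lattice points remain tied at that order, you must go further, and in principle you need an argument that the power series $q(t_1,x_1)-q(t_2,x_2)$ in $n^{-1}$ is not identically zero --- or else accept that both are optimal. Second, and relatedly, the conjecture as stated asserts periodicity of \emph{the} function $\ell^*$, but the paper explicitly allows $\ell^*$ to be non-unique; if a persistent tie occurs along some residue class of $n$, you must exhibit a \emph{selection} of $\ell^*$ that is periodic, which is a slightly different (and not obviously easier) statement than what your nearest-point argument delivers. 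These are genuine gaps between your sketch and a proof, not mere bookkeeping.
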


It is not, however, possible to show a similar statement (that $\ell^* = \frac 12 n + O(1)$) for irrationals. Let
\[p = \sum_{i=0}^\infty \frac 1{F(i)},\]
where $F$ is a sufficiently fast-growing function so that $F(i)$ is an even positive integer for each $i$. Note that this is extremely well-approximated by a sequence of fractions with even denominator. We can then use methods similar to that of the proof of Proposition \ref{evendenom} to show a similar bound, in particular obtaining $\ell^* = \frac 12 n + \omega(1)$.

Another related line of questioning is a generalization to polynomials. One can interpret $X$ as a linear polynomial in the $\xi_i$. In the spirit of Theorem 1.10 of \cite{cacia}, we may ask a generalization of Question \ref{ques} where $X$ is an arbitrary polynomial in the $\xi_i$. Of course, some restriction on the polynomial is required to prevent it from being the zero polynomial. For example, Theorem 1.10 of \cite{cacia} restricts to only the point probabilities where $x$ is not equal to the constant coefficient of $X$. This is still only useful in the regime where $p$ is small, since otherwise the polynomial $(1-\xi_1)(1-\xi_2)\dots(1-\xi_n)$ is 0 with probability $1 - (1-p)^n$. But note that in this case $X$ usually does not depend on the value of any given $\xi_i$. Thus, a stronger condition is needed to ensure that $X$ has a strong enough dependence on each $\xi_i$. For example, we might require that flipping the value of $\xi_i$ for any given $i$ changes the value of $X$. This is also analogous to the linear case considered in this paper, where the requirement that $a_i \neq 0$ is the same as requiring that changing each $\xi_i$ changes $X$. We thus ask the following question.

\begin{question} \label{polyques}
Let $f$ be an $n$-variable polynomial such that
\[f(x_1, \dots, x_{i-1}, 0, x_{i+1}, \dots, x_n) \neq f(x_1, \dots, x_{i-1}, 1, x_{i+1}, \dots, x_n)\]
whenever $x_1, \dots, x_{i-1}, x_{i+1}, \dots, x_n$ are each $0$ or $1$. Then, let $X = f(\xi_1, \dots, \xi_n)$, where $\xi_1, \dots, \xi_n$ are independent instances of $\Ber(p)$. What is the maximum possible value of $\max_{x \in \R} \Pr(X = x)$?
\end{question} 

One important special case of the polynomial version is the concentration of the number of subgraphs isomorphic to a fixed graph in the random graph $G(n, p)$, which can be expressed as a polynomial in the indicator variables for the edges of the graph. (Note that this does not actually satisfy the condition in Question \ref{polyques}, since changing one edge may not change the subgraph count.) This question is considered by Fox, Kwan, and Sauermann in \cite{subgraph}.

\subsection*{Acknowledgments}
I want to thank Matthew Kwan for proposing this problem to me, mentoring me this past summer, and brainstorming with me on my different ideas towards this problem, as well as extensive help with editing this paper. Thanks also to Mehtaab Sawhney for pointing out the log-concavity fact in the proof of Lemma \ref{closetomean}, and to Zachary Chroman for useful discussions.

It was recently drawn to our attention that Ju{\v s}kevi{\v c}ius and Kurauskas have obtained some similar results \cite{arbdist}, including a version of Theorem \ref{mainthm} for large $n$. Their work is independent.

\bibliographystyle{amsplain}
\bibliography{main}

\begin{appendices}
\section{Approximating \texorpdfstring{$q(t, x)$}{q(t, x)}} \label{app}
In this appendix, we will estimate the value of $q(t, x)$ as defined in (\ref{qexp}) in order to help determine $\ell^*$. We assume that $p$ is not any of $0, \frac 12, 1$. As in Section \ref{odd}, we assume $p$ is constant for the purposes of asymptotic notation,

We will assume that $|x - tp| \le n^{0.01}$; the other case will be dealt with later. We also assume that $p$ is a fixed constant and that $n$ is sufficiently large.

We approximate the integrand in (\ref{qexp}), using a Taylor series approximation at $y=0$. We have 
\begin{align}
\ln \left| 1-p+pe^{-iy} \right| 
&= \frac 12 \ln \left( (1-p+pe^{-iy})(1-p+pe^{iy}) \right) \nonumber \\
&= \frac 12 \ln \left( 1 - (2 - e^{-iy} - e^{iy})p(1-p) \right) \nonumber \\
&= \frac 12 \ln( 1 - (2 - 2 \cos y)p(1-p) ) \label{lncos} \\
&= \frac 12 \ln( 1 - p(1-p)y^2 + O(y^4) ) \nonumber \\
&= -\frac{p(1-p)}2 y^2 + O(y^4). \label{part1}
\end{align}
The Taylor series approximations are valid since $y$ is bounded (by $\pi$).

Note that (\ref{lncos}) is decreasing for $0 \le y \le \pi$, and thus so is $| 1-p+pe^{-iy} |^n$. Thus, by (\ref{part1}), we have that 
\begin{equation}
\left| 1-p+pe^{-iy} \right|^n \le e^{n(-p(1-p)n^{-0.8}/2 + O(n^{-1.6}))} = e^{-\Omega(n^{0.2})}, \qquad \text{for $n^{-0.4} \le y \le \pi$}.
\end{equation}
Therefore, by (\ref{qexp}) (recalling that the integrand is positive),
\begin{align}
q(t, x) 
&= \int_{0}^{n^{-0.4}} 
\left| 1-p+pe^{-iy} \right|^n
\left(1-\cos \left( xy + t \arg(1-p+pe^{-iy}) \right)\right) 
\, dy \nonumber \\
&\qquad + \int_{n^{-0.4}}^{\pi} 
\left| 1-p+pe^{-iy} \right|^n
\left(1-\cos \left( xy + t \arg(1-p+pe^{-iy}) \right)\right) 
\, dy \nonumber \\
&= \int_{0}^{n^{-0.4}} 
\left| 1-p+pe^{-iy} \right|^n
\left(1-\cos \left( xy + t \arg(1-p+pe^{-iy}) \right)\right) 
\, dy + 2\pi e^{-\Omega(n^{0.2})}. \label{truncated}
\end{align}
We thus assume henceforth that $y \le n^{-0.4}$.

Now we also have (since $y$ is small)
\begin{align*}
\cos(xy + t \arg(1-p+pe^{-iy}))
&= \cos \left( xy + t \arctan\left( \frac{p \sin y}{1 - p + p \cos y} \right) \right). \\
\end{align*}
Expanding the Taylor series, we get
\begin{align}
\cos(xy + t \arg(1-p+pe^{-iy}))
&= \cos\left(xy + t\left(-py + \frac 16 (p - 3p^2 + 2p^3) y^3 + O(y^5)\right)\right) \nonumber \\
&= \cos \left( (x-tp)y + \frac 16 (p - 3p^2 + 2p^3) ty^3 + O(|t|y^5) \right) \nonumber \\
&= 1 - \frac 12 \left( (x-tp)y + \frac 16 (p - 3p^2 + 2p^3) ty^3 \right)^2 \nonumber \\
&\qquad + O((x-tp)^4 y^4 + |x-tp|^3 |t| y^6 + (x-tp)^2 t^2 y^8 \nonumber \\
&\qquad \qquad + |x-tp||t|^3y^{10} + |x-tp||t|y^6 + t^2y^8 + t^4 y^{12}) \nonumber \\
&= 1 - \frac 12 \left( (x-tp)y + \frac 16 (p - 3p^2 + 2p^3) ty^3 \right)^2 \nonumber \\
&\qquad + o((x-tp)^2y^2) + o(|t||x-tp|y^4) + o(t^2y^6),\label{part2}
\end{align}
where in the last step we have used the bounds $|x-tp| \le n^{0.01}, t \le n, y \le n^{-0.4}$. The first Taylor series approximation is again valid because $y$ is bounded, and the second is valid because, though its argument may be unbounded, the function $\cos$ itself is bounded. (In particular, it is true for all $z$ that $\cos z = 1 - z^2/2 + O(z^4)$.)

For brevity, define the constants $a = \frac 12 p(1-p), b = \frac 16 (p - 3p^2 + 2p^3)$. Note $a > 0, b \neq 0$. Applying the estimates from (\ref{part1}), (\ref{truncated}), and (\ref{part2}),
\begingroup
\allowdisplaybreaks
\begin{align*}
q(t, x) 
&= \int_{0}^{n^{-0.4}} 
e^{-any^2 + O(ny^4)}
\left( \frac 12 \left( (x-tp)y + b ty^3 \right)^2  + o((x-tp)^2y^2) + o(|t||x-tp|y^4) + o(t^2y^6)\right)
\, dy \nonumber \\*
&\qquad + 2\pi e^{-\Omega(n^{0.2})} \nonumber \\
&= \int_{0}^{n^{-0.4}} 
e^{-any^2 + O(n^{-0.6})}
\left( (1+o(1))\frac 12(x-tp)^2y^2 + (1+o(1))bt(x-tp)y^4  \right. \nonumber \\* &\qquad \qquad \qquad \qquad \qquad \qquad \qquad
\left.{} + (1+o(1))\frac 12 b^2t^2y^6 \right)
\, dy + 2\pi e^{-\Omega(n^{0.2})} \nonumber \\
&= \int_{0}^{n^{-0.4}} 
e^{-any^2}
\left( (1+o(1))\frac 12(x-tp)^2y^2 + (1+o(1))bt(x-tp)y^4 + (1+o(1))\frac 12 b^2t^2y^6 \right)
\, dy \nonumber \\*
&\qquad + 2\pi e^{-\Omega(n^{0.2})} \nonumber \\
&= (an)^{-1/2} \int_{0}^{n^{0.1}} 
e^{-z^2} \left((1+o(1)) \frac 12(x-tp)^2(an)^{-1}z^2 + (1+o(1)) bt(x-tp)(an)^{-2}z^4 \right. \nonumber \\*
&\qquad \qquad \qquad \qquad \qquad \qquad
\left. {} + (1+o(1)) \frac 12 b^2t^2(an)^{-3}z^6 \right) \, dz + 2\pi e^{-\Omega(n^{0.2})} \nonumber \\
&= (1+o(1))\frac 12 (an)^{-3/2} (x-tp)^2\int_{0}^{\infty} e^{-z^2} z^2 \, dz
+ (1+o(1)) (an)^{-5/2} bt(x-tp) \int_{0}^{\infty} e^{-z^2} z^4 \, dz \nonumber \\*
&\qquad + (1+o(1)) \frac 12 (an)^{-7/2} b^2t^2 \int_{0}^{\infty} e^{-z^2} z^6 \, dz + 2\pi e^{-\Omega(n^{0.2})} \nonumber \\
&=   (1 + o(1))\frac{\sqrt \pi}{8} (an)^{-3/2} (x-tp)^2 
+ (1 + o(1))\frac{3 \sqrt \pi}{8} (an)^{-5/2} bt(x-tp) \nonumber \\*
& \qquad + (1 + o(1))\frac{15 \sqrt \pi}{32} (an)^{-7/2} b^2t^2 \nonumber \\
&= \frac{\sqrt \pi}{32} (an)^{-7/2}b^2 ((1+o(1))4u^2 + (1+o(1))12ut + (1+o(1))15t^2),
\end{align*}
\endgroup
where we have defined
\begin{align*}
u = u(t, x) = \frac ab n (x-tp).
\end{align*}
Now, we have that $4u^2 + 12ut + 15t^2 \ge \max(\frac{12}{5}u^2, 6t^2)$, so we can pull the $1+o(1)$ factors to the front:
\begin{align}
q(t, x) &= (1+o(1))\frac{\sqrt \pi}{32} (an)^{-7/2}b^2 (4u^2 + 12ut + 15t^2), &\text{for $|x - tp| \le n^{0.01}$}. \label{qestapp}
\end{align}
This is the estimate we will use in order to determine $\ell^*$ in Section \ref{odd}.

It remains to rule out the case where $|x - tp| > n^{0.01}$. We thus show the following lemma.

\begin{lem} \label{closetomean}
If $x$ maximizes $\Pr(X = x)$ (for $t$ fixed), then $|x - tp| \le n^{0.01}$ for sufficiently large $n$.
\end{lem}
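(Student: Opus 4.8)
The plan is to show that if $|x - tp|$ is large, then $\Pr(X = x)$ is so small that it cannot be the maximum, because there is always \emph{some} competing value of $x$ (namely one near $tp$, or even $x = tp$ when this is an integer, but more simply any fixed small $x$) that does substantially better. The cleanest route is to compare against a trivial lower bound on the concentration probability and an upper bound that decays in $|x - tp|$.

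First I would establish a crude but robust \textbf{lower bound} on $\max_x \Pr(X = x)$ valid for all $t$: since $X = \Bin(\ell, p) - \Bin(m, p)$ has mean $(\ell - m)p = tp$ and variance $np(1-p) = \Theta(n)$, a local central limit theorem (or just the explicit convolution formula, or even Chebyshev combined with the fact that $X$ is integer-valued on an interval of length $2n$) gives $\max_x \Pr(X = x) = \Omega(n^{-1/2})$. Concretely, some integer $x$ within $O(\sqrt n)$ of $tp$ must receive probability $\Omega(n^{-1/2})$. Hence $\max_x \Pr(X = x) \ge c_0 n^{-1/2}$ for an absolute constant $c_0 > 0$.

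Next I would give an \textbf{upper bound} on $\Pr(X = x)$ in terms of $|x - tp|$ that beats $c_0 n^{-1/2}$ once $|x - tp| > n^{0.01}$. The natural tool is the Fourier-integral representation already derived in Section \ref{odd}: $\Pr(X = x) = \frac{1}{\pi}\int_0^\pi \left|1 - p + p e^{-iy}\right|^n \cos\!\big(xy + t\arg(1-p+pe^{-iy})\big)\,dy$, so in particular $\Pr(X = x) \le \frac{1}{\pi}\int_0^\pi \left|1-p+pe^{-iy}\right|^n dy$. From the Taylor estimate $\ln|1-p+pe^{-iy}| = -a y^2 + O(y^4)$ with $a = \tfrac12 p(1-p) > 0$ (equation (\ref{part1})) together with the monotonicity of $|1-p+pe^{-iy}|$ on $[0,\pi]$, this integral is $\Theta(n^{-1/2})$ — which is not by itself enough, so I need to exploit cancellation from the cosine when $|x - tp|$ is large. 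The way to do this: write $\cos(\cdot) = 1 - (1 - \cos(\cdot))$ as in (\ref{qderiv})–(\ref{qexp}), so that $\Pr(X=x)$ equals the constant $\frac1\pi\int_0^\pi |1-p+pe^{-iy}|^n dy$ minus $\frac1\pi q(t,x)$. Maximizing $\Pr(X=x)$ is the same as minimizing $q(t,x)$, and $q(t,x) \ge 0$ with equality essentially only near $x = tp$. So it suffices to show: if $|x - tp| > n^{0.01}$ then $q(t,x)$ is bounded below by a quantity strictly larger than $q(t, x')$ for some competing $x'$ with $|x' - tp| \le n^{0.01}$. For the competing value take $x'$ the nearest integer to $tp$ (or $tp$ itself if integral), so $|x' - tp| \le 1/2$ and by (\ref{qestapp}) $q(t,x') = O(n^{-7/2} b^2 t^2) = O(n^{-3/2})$. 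For the lower bound on $q(t,x)$ when $|x-tp| > n^{0.01}$: restrict the integral defining $q$ to the range $y \in [0, n^{-0.49}]$, where $|1-p+pe^{-iy}|^n = \Theta(1) \cdot e^{-\Theta(ny^2)}$ is bounded below by a constant on a subinterval $y \in [\delta n^{-1/2}, 2\delta n^{-1/2}]$, and on that subinterval the phase $xy + t\arg(1-p+pe^{-iy}) \approx (x - tp)y$ sweeps through an interval of length $\approx |x - tp| \cdot \delta n^{-1/2} \gg n^{0.01 - 1/2} \cdot \delta n^{0}$... more carefully, $|x-tp|\cdot n^{-1/2} \gg n^{0.01}\cdot n^{-1/2}$, which does go to $0$, so I instead integrate over a fixed range $y \in [0, \eta]$ for a small constant $\eta$: there $|1-p+pe^{-iy}|^n \le e^{-an y^2/2}$, and substituting $z = \sqrt n\, y$ shows $q(t,x) \gtrsim n^{-1/2}\int_0^{\eta\sqrt n} e^{-az^2}\big(1 - \cos((x-tp) z/\sqrt n + \cdots)\big)dz$; since $(x - tp)/\sqrt n$ may be small this still needs care. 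The robust fix is to use the \emph{full} range and the fact (standard for the characteristic function of a non-degenerate integer random variable) that $\frac1\pi\int_0^\pi |1-p+pe^{-iy}|^n\,dy - \frac1\pi q(t,x) = \Pr(X=x)$, combined with the bound $\Pr(X = x) \le \exp(-\Omega((x-tp)^2/n))$ coming from a Chernoff/Bernstein tail bound on $\Bin(\ell,p) - \Bin(m,p)$: once $|x - tp| > n^{0.01}$ we do \emph{not} immediately get below $n^{-1/2}$ from this (the tail only kicks in at deviation $\gg \sqrt n$), so the genuinely needed statement is finer.

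\textbf{The main obstacle}, and where the real work lies, is precisely that $n^{0.01}$ is far below the standard deviation $\sqrt n$, so naive tail bounds on $X$ do not separate $x$ with $|x-tp| = n^{0.01}$ from the optimum. The resolution must come from the \emph{local} behavior: the point probabilities $\Pr(X = x)$, as a function of integer $x$, are unimodal-ish and decay like a Gaussian density $\sim (2\pi n p(1-p))^{-1/2}\exp(-(x - tp)^2/(2np(1-p)))$ by a local CLT with an explicit error term, \emph{but} the lemma is about the gap at scale $n^{0.01} \ll \sqrt n$, where this Gaussian is essentially flat — so a local CLT alone is also insufficient. The correct mechanism is the $q(t,x)$ expansion (\ref{qestapp}) itself: it shows $q(t,x) = (1+o(1)) c\,(4u^2 + 12ut + 15 t^2)$ with $u = \frac{a}{b} n (x - tp)$, so $q$ grows quadratically in $u$ and hence in $(x - tp)$, and already for $|x - tp| = n^{0.01}$ one has $|u| = \Theta(n^{1.01})$, giving $q(t,x) \ge (1+o(1)) c\cdot \tfrac{12}{5} u^2 = \Omega(n^{-7/2}\cdot n^{2.02}) = \Omega(n^{-1.48})$, whereas the competing $q(t, x') = O(n^{-3/2}) = O(n^{-1.5})$ is smaller. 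Thus I expect the actual proof to run: assume for contradiction $|x - tp| > n^{0.01}$ at the optimum; extend the validity of the estimate leading to (\ref{qestapp}) just past the threshold (the derivation in the appendix only used $|x - tp| \le n^{0.01}$ to control \emph{error} terms, and one can push to, say, $|x - tp| \le n^{0.1}$ with the same bookkeeping), concluding $q(t,x) = \Omega(n^{-1.48})$ there; and separately note $q(t, x^{\flat}) = O(n^{-3/2})$ for $x^\flat$ the nearest integer to $tp$, contradicting optimality of $x$. The one delicate point to get right is that the appendix's Taylor expansion was only claimed for $|x - tp| \le n^{0.01}$; I would either (i) redo that expansion allowing $|x-tp|$ up to $n^{0.1}$, checking the error terms $o((x-tp)^2 y^2), o(|t||x-tp|y^4), o(t^2 y^6)$ still dominate their predecessors under $y \le n^{-0.4}$, or (ii) monotonicity: argue $\Pr(X = x)$ is monotone in $x$ for $x$ on the far side of $tp$ beyond $n^{0.01}$ (comparing consecutive point probabilities via the explicit ratio $\Pr(X=x+1)/\Pr(X=x)$ for a difference of binomials), reducing the whole range $|x - tp| > n^{0.01}$ to its boundary case $|x - tp| = n^{0.01}$, which is covered by (\ref{qestapp}) in the limit. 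Either way, the crux is the bound $q(t, x) = \Omega(n^{-1.48})$ when $|x - tp| = \Theta(n^{0.01})$ versus $q(t, x^\flat) = O(n^{-3/2})$.
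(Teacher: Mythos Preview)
Your overall strategy is right: maximizing $\Pr(X=x)$ is the same as minimizing $q(t,x)$, and the contradiction should come from comparing a value of $q$ of order $n^{-1.48}$ (forced by a deviation of size $n^{0.01}$) against $q(t,x^\flat)=O(n^{-3/2})$ at $x^\flat=\lfloor tp\rfloor$. That numerical comparison is exactly the crux of the paper's proof as well.

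Where your execution falls short is in both of your proposed closures. Option (i) only extends the validity of (\ref{qestapp}) to, say, $|x-tp|\le n^{0.1}$; it says nothing about maximizers with $|x-tp|$ between $n^{0.1}$ and $\sqrt n$, a range where neither the Taylor expansion (the phase $(x-tp)y$ is no longer small on the whole window $y\le n^{-0.4}$) nor Chernoff-type tail bounds (the deviation is still $o(\sqrt n)$) give anything. Option (ii) is circular as stated: arguing that $\Pr(X=x)$ is decreasing for $x>tp+n^{0.01}$ is equivalent to saying the mode lies in $[tp-n^{0.01},tp+n^{0.01}]$, which is precisely the lemma you are trying to prove, and there is no simple closed-form ratio $\Pr(X=x+1)/\Pr(X=x)$ for a difference of two binomials that would let you verify this directly.

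The paper sidesteps this as follows. First, it observes that $X\sim \Bin(\ell,p)+\Bin(m,1-p)-m$ is a convolution of log-concave sequences and hence has a \emph{unimodal} distribution; since $x'$ is assumed to be the maximizer, $x'$ \emph{is} the mode, so $\Pr(X=\cdot)$ is nondecreasing on the integers from $x_0=\lfloor tp\rfloor$ up to $x'$. This gives $q(t,x_0)\ge q(t,x'-n^{0.01})\ge q(t,x')$, hence $2q(t,x_0)\ge q(t,x')+q(t,x'-n^{0.01})$. Second---and this is the trick you are missing---it lower-bounds that \emph{sum} directly from the integral, using no information about $x'$ at all: for any $v$ and any $\delta=o(1)$ one has $2-\cos v-\cos(v-\delta)\ge \Omega(\delta^2)$, so with $\delta=n^{0.01}y$ on the window $y\le n^{-0.4}$ one gets
\[
q(t,x')+q(t,x'-n^{0.01})\;\ge\;\Omega(n^{0.02})\int_0^{n^{-0.4}} e^{-any^2}y^2\,dy\;=\;\Omega(n^{-1.48}).
\]
Combined with $q(t,x_0)=O(n^{-3/2})$ from (\ref{qestapp}), this contradicts $2q(t,x_0)\ge q(t,x')+q(t,x'-n^{0.01})$. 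In short, the missing ingredients are (a) log-concavity to get unimodality with mode at $x'$, and (b) the two-point cosine identity, which produces an $\Omega(n^{-1.48})$ lower bound without ever needing to evaluate $q$ at a point with large $|x-tp|$.
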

\begin{proof}
Let $x'$ be the value of $x$ which maximizes $\Pr(X = x)$. Suppose that $|x' - tp| > n^{0.01}$. Suppose also that $x' > tp$, so that $x' - tp > n^{0.01}$ (the other case is almost identical). 

Note that $X \sim \Bin(\ell, p) - \Bin(n - \ell, p) \sim \Bin(\ell, p) + \Bin(n - \ell, 1 - p) - (n - \ell)$. Since the binomial distribution is log-concave, and the convolution of two log-concave functions is also log-concave \cite{logconc}, the distribution of $X$ must also be log-concave. In particular, this means that the distribution of $X$ is unimodal. Thus, if we let $x_0 = \lfloor tp \rfloor$, then $\Pr(X = x_0) \le \Pr(X = x' - n^{0.01}) \le \Pr(X = x')$. Recalling from (\ref{qderiv}) that $\frac 1\pi q(t, x)$ and $\Pr(X = x)$ sum to a constant (depending only on $p$ and $n$), this means that $q(t, x_0) \ge q(t, x' - n^{0.01}) \ge q(t, x')$. However, we have
\begin{align*}
q(t, x' - n^{0.01}) + q(t, x') 
&\ge \int_0^{n^{-0.4}} \left| 1-p+pe^{-iy} \right|^n \left(2 - \cos \left( x'y + t \arg(1-p+pe^{-iy}) \right)\right. \\
&\qquad \qquad \qquad \qquad \qquad \qquad \qquad -{} \left. \cos \left( (x' - n^{0.01})y + t \arg(1-p+pe^{-iy}) \right)\right) \, dy \\
&= \int_0^{n^{-0.4}} e^{-any^2 + O(ny^4)} (2 -\cos(v) - \cos(v - n^{0.01}y))) \, dy \\
&= \Omega(1) \int_0^{n^{-0.4}} e^{-any^2} (2 -\cos(v) - \cos(v - n^{0.01}y))) \, dy,
\end{align*}
where $v$ is some function of $y$. But note that $n^{0.01}y = o(1)$ for $y \le n^{-0.4}$, so we have
\[2 -\cos(v) - \cos(v - n^{0.01}y) \ge \Omega(n^{0.02}y^2),\]
for any $v$, and thus
\begin{align*}
q(t, x' - n^{0.01}) + q(t, x') &\ge \Omega(n^{0.02}) \int_0^{n^{-0.4}} e^{-any^2} y^2 \, dy. \\
&= \Omega(n^{-1.48}) \int_0^{n^{0.1}} e^{z^2} z^2 \, dz \\
&= \Omega(n^{-1.48}) \int_0^{\infty} e^{z^2} z^2 \, dz \\
&= \Omega(n^{-1.48})
\end{align*}
However, by (\ref{qestapp}) (recalling that $|x_0 - tp| < 1$), we have that $q(t, x_0) = O(n^{-1.5})$, so for sufficiently large $n$, $q(t, x' - n^{0.01}) + q(t, x') > 2q(t, x_0)$, a contradiction.
\end{proof}
This completes the discussion of approximating $q(t, x)$.
\end{appendices}

\end{document}